\newtheorem{theorem}{Theorem}[section]
\newtheorem{corollary}[theorem]{Corollary}
\newtheorem{lemma}[theorem]{Lemma}
\newtheorem{proposition}[theorem]{Proposition}
\newtheorem{defin}[theorem]{Definition}
\newtheorem{examp}[theorem]{Example}
\newenvironment{example}{\begin{examp}\normalfont\quad}{\end{examp}}
\newtheorem{rema}[theorem]{Remark}
\newtheorem{prob}[theorem]{Problem}
\numberwithin{equation}{section}
\newcommand{\bt}{\begin{thm}}
\newcommand{\et}{\end{thm}}
\newcommand{\bp}{\begin{proof}}
\newcommand{\ep}{\end{proof}}
\newcommand{\bprop}{\begin{prop}}
\newcommand{\eprop}{\end{prop}}
\newcommand{\bl}{\begin{lemma}}
\newcommand{\el}{\end{lemma}}
\newcommand{\bc}{\begin{corollary}}
\newcommand{\ec}{\end{corollary}}
\newcommand{\Z}{\mathbb{Z}}
\newcommand{\C}{\mathbb{C}}
\newcommand{\be}{\begin{enumerate}}
\newcommand{\ee}{\end{enumerate}}
\newcommand{\OMIT}[1]{}
\title{Restricted linear congruences}
\author{Khodakhast Bibak \thanks{Department of Computer Science, University of Victoria, Victoria, BC, Canada V8W 3P6. Email: {\tt \{kbibak,bmkapron,srinivas\}@uvic.ca}} \and Bruce M. Kapron \footnotemark[1] \and Venkatesh Srinivasan \footnotemark[1] \thanks{Centre for Quantum Technologies, National University of Singapore, Singapore 117543.} \and Roberto Tauraso \thanks{Dipartimento di Matematica, Universit\`a di Roma ``Tor Vergata'', 00133 Roma, Italy. Email: {\tt tauraso@mat.uniroma2.it}} \and L\'aszl\'o T\'oth
\thanks{Department of Mathematics, University of P\'ecs, 7624 P\'ecs, Hungary. Email: {\tt ltoth@gamma.ttk.pte.hu}}}
\begin{document}

\maketitle

\begin{abstract}
In this paper, using properties of Ramanujan sums and of the discrete Fourier transform of arithmetic functions, we give an explicit formula for the number of solutions of the linear congruence
$a_1x_1+\cdots +a_kx_k\equiv b \pmod{n}$, with $\gcd(x_i,n)=t_i$ ($1\leq i\leq k$), where $a_1,t_1,\ldots,a_k,t_k, b,n$ ($n\geq 1$) are arbitrary integers. As a consequence, we derive necessary and sufficient conditions under which the above restricted linear congruence has no solutions. The number of solutions of this kind of congruence was first considered by Rademacher in 1925 and Brauer in 1926, in the special case of $a_i=t_i=1$ $(1\leq i \leq k)$. Since then, this problem has been studied, in several other special cases, in many papers; in particular, Jacobson and Williams [{\it Duke Math. J.} {\bf 39} (1972), 521--527] gave a nice explicit formula for the number of such solutions when $(a_1,\ldots,a_k)=t_i=1$ $(1\leq i \leq k)$. The problem is very well-motivated and has found intriguing applications in several areas of mathematics, computer science, and physics, and there is promise for more applications/implications in these or other directions.
\end{abstract}

{\bf Keywords:} Restricted linear congruence; Ramanujan sum; discrete Fourier transform
\vskip .3cm {\bf 2010 Mathematics Subject Classification:} 11D79, 11P83, 11L03, 11A25, 42A16

\section{Introduction}\label{Sec 1}

Let $a_1,\ldots,a_k,b,n\in \Z$, $n\geq 1$. A linear congruence in $k$ unknowns $x_1,\ldots,x_k$ is of the form
\begin{align} \label{cong form}
a_1x_1+\cdots +a_kx_k\equiv b \pmod{n}.
\end{align}
By a solution of (\ref{cong form}) we mean an ordered $k$-tuple of integers modulo $n$, denoted by 
$\langle x_1,\ldots,x_k\rangle$, that satisfies (\ref{cong form}). Let $(u_1,\ldots,u_m)$ denote the
greatest common divisor (gcd) of $u_1,\ldots,u_m\in \Z$. The following result, proved by D. N. Lehmer \cite{LEH2}, gives the number of solutions of the above linear congruence:

\begin{proposition}\label{Prop: lin cong}
Let $a_1,\ldots,a_k,b,n\in \Z$, $n\geq 1$. The linear congruence $a_1x_1+\cdots +a_kx_k\equiv b \pmod{n}$ has a solution $\langle x_1,\ldots,x_k \rangle \in \Z_{n}^k$ if and only if $\ell \mid b$, where 
$\ell=(a_1, \ldots, a_k, n)$. Furthermore, if this condition is satisfied, then there are $\ell n^{k-1}$ solutions.
\end{proposition}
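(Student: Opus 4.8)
The plan is to prove this by reducing to the single-variable case via a standard change of variables using the structure of the gcd. Let me think about the cleanest approach.The plan is to argue by induction on the number of variables $k$, reducing the count to a sequence of single-variable linear congruences. For the base case $k=1$, the statement is the classical fact about one linear congruence: $a_1x_1\equiv b\pmod n$ is solvable if and only if $(a_1,n)\mid b$, and in that case it has exactly $(a_1,n)$ solutions in $\Z_n$; since here $\ell=(a_1,n)$ and $n^{k-1}=1$, this is precisely $\ell n^{k-1}$.

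For the inductive step, suppose the result holds for $k-1$ variables, and set $g=(a_1,\ldots,a_{k-1},n)$, so that $\ell=(g,a_k)$. I would fix the last variable $x_k\in\Z_n$ and count the solutions of the remaining congruence
\[
a_1x_1+\cdots+a_{k-1}x_{k-1}\equiv b-a_kx_k \pmod n
\]
in the variables $x_1,\ldots,x_{k-1}$. By the inductive hypothesis, for a given $x_k$ this has solutions if and only if $g\mid(b-a_kx_k)$, in which case it has exactly $g\,n^{k-2}$ of them. Hence the total number of solutions equals $g\,n^{k-2}$ times the number of $x_k\in\Z_n$ satisfying $a_kx_k\equiv b\pmod g$.

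The crux is therefore to count $N:=\#\{x_k\in\Z_n : a_kx_k\equiv b\pmod g\}$. The congruence $a_kx_k\equiv b\pmod g$ is solvable if and only if $(a_k,g)\mid b$; and since $(a_k,g)=(a_k,a_1,\ldots,a_{k-1},n)=\ell$, this recovers exactly the solvability criterion $\ell\mid b$ and shows non-solvability otherwise. When $\ell\mid b$, there are $\ell$ residues modulo $g$ solving the congruence, and because $g\mid n$ each such residue class splits into $n/g$ residues modulo $n$, giving $N=\ell\cdot(n/g)$. Substituting back yields $g\,n^{k-2}\cdot\ell\,(n/g)=\ell\,n^{k-1}$, as claimed.

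The step I expect to require the most care is this last counting argument: one must keep straight that the inner congruence constrains $x_k$ only modulo $g$ while $x_k$ itself ranges over $\Z_n$, so the lifting factor $n/g$ from modulus $g$ to modulus $n$ is essential, as is the gcd identity $(a_k,g)=\ell$ that glues the solvability condition together. The base-case count of a single congruence is standard and may be invoked directly.
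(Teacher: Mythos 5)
Your induction is correct and complete: the base case is the classical one-variable count, the gcd identity $(a_k,g)=(a_k,(a_1,\ldots,a_{k-1},n))=(a_1,\ldots,a_k,n)=\ell$ is applied correctly, and the lifting factor $n/g$ when passing from residues modulo $g$ to residues modulo $n$ is exactly the point that needs care, and you handle it properly; both directions of the solvability criterion also fall out of your argument, since $N=0$ precisely when $\ell\nmid b$. Note, however, that the paper offers no proof of this proposition to compare against: it is quoted as a classical result of D.~N.~Lehmer (1913) with a citation only. So your contribution is a self-contained elementary proof, which is a perfectly standard route. A different proof, more in the spirit of the techniques the paper actually develops (Ramanujan sums and the discrete Fourier transform), would be to write the number of solutions as
\begin{equation*}
\frac{1}{n}\mathlarger{\sum}_{j=1}^{n} e\!\left(\frac{-jb}{n}\right)\mathlarger{\prod}_{i=1}^{k}\mathlarger{\sum}_{x_i=1}^{n} e\!\left(\frac{ja_ix_i}{n}\right),
\end{equation*}
observe via \eqref{exp_prop} that the inner product is $n^k$ when $\frac{n}{\ell}\mid j$ and $0$ otherwise, and then sum the remaining geometric series to get $\ell n^{k-1}$ when $\ell\mid b$ and $0$ otherwise; this one-line computation is closer to how the paper proves its Theorems \ref{thm:k var1} and \ref{thm:k var1 more}, whereas your induction avoids complex exponentials entirely.
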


Interestingly, this classical result of D. N. Lehmer has been recently used (\cite{BKS4}) in introducing GMMH$^*$ which is a generalization of the well-known $\triangle$-universal hash function family, MMH$^*$.

The solutions of the above congruence may be subject to certain conditions, such as $\gcd(x_i,n)=t_i$ 
($1\leq i\leq k$), where $t_1,\ldots,t_k$ are given positive divisors of $n$. The number of solutions of this kind of congruence, we call it {\it restricted linear congruence}, was investigated in special cases by several authors. It was shown by Rademacher \cite{Rad1925} in 1925 and Brauer \cite{Bra1926} in 1926 that the number $N_n(k,b)$ of solutions of the congruence $x_1+\cdots +x_k\equiv b \pmod{n}$ with the restrictions $(x_i,n)=1$ ($1\leq i\leq k$) is
\begin{align} \label{rest_cong_1}
N_n(k,b)= \frac{\varphi(n)^k}{n} \mathlarger{\prod}_{p\, \mid \, n, \,
p\, \mid\, b} \!\left(1-\frac{(-1)^{k-1}}{(p-1)^{k-1}}
\right)\mathlarger{\prod}_{p\, \mid\, n, \, p \, \nmid \, b}
\!\left(1-\frac{(-1)^k}{(p-1)^k}\right),
\end{align}
where $\varphi(n)$ is Euler's totient function and the products are taken over all prime divisors $p$ of $n$. This result was rediscovered later by Dixon \cite{DIX} and Rearick \cite{REA}. The equivalent formula
\begin{align} \label{rest_cong_1_Ram}
N_n(k,b)= \frac1{n} \mathlarger{\sum}_{d\, \mid\, n} c_d(b) \!\left(c_n\left(\frac{n}{d}\right)\right)^k,
\end{align}
involving the Ramanujan sums $c_n(m)$ (see Section \ref{Sec_2_1}) was obtained by Nicol and Vandiver \cite[Th.\ VII]{NV} and reproved by Cohen \cite[Th.\ 6]{COH0}.

The special case of $k=2$ was treated, independently, by Alder \cite{Ald1958}, Deaconescu \cite{Dea2000}, and Sander \cite{San2009}. For $k=2$ the function $N_n(2,b)$ coincides with Nagell's totient function (\cite{NAG}) defined to be the number of integers $x$ (mod $n$) such that $(x, n)=(b-x, n)=1$. From
(\ref{rest_cong_1}) one easily gets
\begin{align} \label{rest_cong_1 k=2}
N_n(2,b)= n \mathlarger{\prod}_{p\, \mid\, n, \,  p\, \mid\, b}
\!\left(1-\frac{1}{p}\right)\mathlarger{\prod}_{p\, \mid\, n, \, p\, \nmid
\, b} \!\left(1-\frac{2}{p} \right).
\end{align}
From (\ref{rest_cong_1 k=2}) it is clear that $N_n(2,0)=\varphi(n)$ and
\begin{align} \label{rest_cong_1 k=2, b=1}
N_n(2,1)= n\mathlarger{\prod}_{p\, \mid\, n} \!\left(1-\frac{2}{p}\right).
\end{align}

Interestingly, the function $N_n(2,1)$ was applied by D. N. Lehmer \cite{LEH} in studying certain magic squares. It is also worth mentioning that the case of $k=2$ is related to a long-standing conjecture due to D. H. Lehmer from 1932 (see \cite{Dea2000, Dea2006}), and also has interesting applications to Cayley graphs (see \cite{San2009, SanSan2013}).

The problem in the case of $k$ variables can be interpreted as a `restricted partition problem modulo $n$' (\cite{NV}), or an equation in the ring $\Z_n$, where the solutions are its units (\cite{Dea2000, San2009, SanSan2013}). More generally, it has connections to studying rings generated by their units, in
particular in finding the number of representations of an element of a finite commutative ring, say $R$, as the sum of $k$ units in $R$; see \cite{KM} and the references therein. The results of Ramanathan
\cite[Th.\ 5 and 6]{Ram1944} are similar to \eqref{rest_cong_1} and \eqref{rest_cong_1_Ram}, but in another context. See also McCarthy \cite[Ch.\ 3]{MCC} and Spilker \cite{SPI} for further results with
these and different restrictions on linear congruences.

The general case of the restricted linear congruence
\begin{equation} \label{gen_rest_cong}
a_1x_1+\cdots +a_kx_k\equiv b \pmod{n}, \quad (x_i,n)=t_i \ (1\leq i\leq k),
\end{equation}
was considered by Sburlati \cite{Sbu2003}. A formula for the number of solutions of \eqref{gen_rest_cong} was deduced in \cite[Eq.\ (4), (5)]{Sbu2003} with some assumptions on the prime factors of $n$ with
respect to the values $a_i,t_i$ ($1\leq i\leq k$) and with an incomplete proof. The special cases of $k=2$ with $t_1=t_2=1$, and $a_i=1$ ($1\leq i\leq k$) of (\ref{gen_rest_cong}) were considered, respectively, by Sander and Sander \cite{SanSan2013}, and Sun and Yang \cite{SY2014}. Cohen \cite[Th.\ 4, 5]{COH2} derived two explicit formulas for the number of solutions of \eqref{gen_rest_cong} with $t_i=1$, $a_i \mid n$, $a_i$ prime ($1\leq i\leq k$). Jacobson and Williams \cite{JAWILL} gave a nice explicit formula for the number of such solutions when $(a_1,\ldots,a_k)=t_i=1$ $(1\leq i \leq k)$. Also, the special case of $b=0$, $a_i=1$, $t_i=\frac{n}{m_i}$, $m_i\mid n$ ($1\leq i\leq k$) is related to the {\it orbicyclic} (multivariate arithmetic) function (\cite{LIS}), which has very interesting combinatorial and topological applications, in particular in counting non-isomorphic maps on orientable surfaces (see \cite{BKS2, LIS, MENE, MENE2, TOT, WAL}). The problem is also related to Harvey's famous theorem on the cyclic groups of automorphisms of compact Riemann surfaces; see Remark~\ref{sepi app}.

The above general case of the restricted linear congruence (\ref{gen_rest_cong}) can be considered as relevant to the generalized knapsack problem (see Remark~\ref{knapsack}). The {\it knapsack problem} is of
significant interest in cryptography, computational complexity, and several other areas. Micciancio \cite{MIC} proposed a generalization of this problem to arbitrary rings, and studied its average-case
complexity. This {\it generalized knapsack problem}, proposed by Micciancio \cite{MIC}, is described as follows: for any ring $R$ and subset $S \subset R$, given elements $a_1, \ldots , a_k \in R$ and a target element $b \in R$, find $\langle x_1,\ldots,x_k\rangle \in S^k$ such that $\sum_{i=1}^k a_i \cdot x_i = b$, where all operations are performed in the ring.

In the one variable case, Alomair et al. \cite{ACP}, motivated by applications in designing an authenticated encryption scheme, gave a necessary and sufficient condition (with a long proof) for the
congruence $ax\equiv b \pmod{n}$, with the restriction $(x,n)=1$, to have a solution. Later, Gro\v{s}ek and Porubsk\'{y} \cite{GP} gave a short proof for this result, and also obtained a formula for the
number of such solutions. In Theorem~\ref{thm:one var} (see Section~\ref{Sec_3}) we deal with this problem in a more general form as a building block for the case of $k$ variables ($k\geq 1$).

In Section~\ref{Sec_3}, we obtain an explicit formula for the number of solutions of the restricted linear congruence \eqref{gen_rest_cong} for arbitrary integers $a_1,t_1,\ldots,a_k,t_k, b,n$ ($n\geq 1$). Two major ingredients in our proofs are Ramanujan sums and the discrete Fourier transform (DFT) of arithmetic functions, of which properties are reviewed in Section~\ref{Sec_2}. Bibak et al. \cite{BKSTT2} applied this explicit formula in constructing an almost-universal hash function family and gave some applications to authentication and secrecy codes.

\section{Preliminaries}\label{Sec_2}

Throughout the paper we use $(a_1,\ldots,a_k)$ to denote the greatest common divisor (gcd) of $a_1,\ldots,a_k\in \Z$, and write $\langle a_1,\ldots,a_k\rangle$ for an ordered $k$-tuple of integers. Also, for $a \in \Z \setminus \lbrace 0 \rbrace$ and a prime $p$ we use the notation $p^r\, \|\, a$ if $p^r\mid a$ and $p^{r+1}\nmid a$.

\subsection{Ramanujan sums} \label{Sec_2_1}

Let $e(x)=\exp(2\pi ix)$ be the complex exponential with period 1, which satisfies for any $m,n\in\Z$ with $n\ge 1$,
\begin{equation} \label{exp_prop}
\mathlarger{\sum}_{j=1}^n e\!\left(\frac{jm}{n}\right) = \begin{cases}
n, \ & \text{ if $n\mid m$}, \\  0, \ & \text{ if $n\nmid m$}.
\end{cases}
\end{equation}

For integers $m$ and $n$ with $n \geq 1$ the quantity 
\begin{align}\label{def1}
c_n(m) = \mathlarger{\sum}_{\substack{j=1 \\ (j,n)=1}}^{n}
e\!\left(\frac{jm}{n}\right)
\end{align}
is called a {\it Ramanujan sum}. It is the sum of the $m$-th powers of the primitive $n$-th roots of unity, and is also denoted by $c(m,n)$ in the literature.

Even though the Ramanujan sum $c_n(m)$ is defined as a sum of some complex numbers, it is integer-valued (see Theorem~\ref{thm:Ram Mob} below). From (\ref{def1}) it is clear that $c_n(-m) = c_n(m)$. Clearly, $c_n(0)=\varphi (n)$, where $\varphi (n)$ is {\it Euler's totient function}. Also, by Theorem~\ref{thm:Ram Mob} or Theorem~\ref{thm:von rama} (see below), $c_n(1)=\mu (n)$, where $\mu (n)$ is the {\it M\"{o}bius function} defined by
\begin{align}\label{def2}
 \mu (n)&=
  \begin{cases}
    1, & \text{if $n=1$,}\\
    0, & \text{if $n$ is not square-free,}\\
    (-1)^{\kappa}, & \text{if $n$ is the product of $\kappa$ distinct primes}.
  \end{cases}
\end{align}

The following theorem, attributed to Kluyver~\cite{KLU}, gives an explicit formula for $c_n(m)$:

\begin{theorem} \label{thm:Ram Mob}
For integers $m$ and $n$, with $n \geq 1$,
\begin{align}\label{for:Ram Mob}
c_n(m) = \mathlarger{\sum}_{d\, \mid\, (m,n)} \mu
\!\left(\frac{n}{d}\right)d.
\end{align}
\end{theorem}

Thus, $c_n(m)$ can be easily computed provided $n$ can be factored efficiently. By applying the M\"{o}bius inversion formula, Theorem~\ref{thm:Ram
Mob} yields the following property: For $m,n\geq 1$,
\begin{align} \label{Orth1 for cons}
\sum_{d\, \mid\, n} c_{d}(m)&=
  \begin{cases}
    n, & \text{if $n\mid m$},\\
    0, & \text{if $n\nmid m$}.
  \end{cases}
\end{align}

The case $m=1$ of \eqref{Orth1 for cons} gives the {\it characteristic property} of the M\"{o}bius function:
\begin{align}\label{Mob_char}
\sum_{d\, \mid\, n} \, \mu (d)&=
  \begin{cases}
    1, & \text{if $n=1$},\\
    0, & \text{if $n>1$}.
  \end{cases}
\end{align}

Note that Theorem~\ref{thm:Ram Mob} has several other important consequences:

\begin{corollary} \label{cor:Ram Mob} Ramanujan sums enjoy the following properties:

(i) For fixed $m\in \Z$ the function $n\mapsto c_n(m)$ is multiplicative, that is, if $(n_1,n_2)=1$, then
$c_{n_1n_2}(m)=c_{n_1}(m)c_{n_2}(m)$. {\rm(}Note that the function $m\mapsto c_n(m)$ is multiplicative for a fixed $n$ if and only if $\mu(n)=1$.{\rm)} Furthermore, for every prime power $p^r$ {\rm ($r\geq 1$)},
\begin{equation} \label{Ramanujan_prime_power}
c_{p^r}(m)=  \begin{cases}
    p^r-p^{r-1}, & \text{if $p^r\mid m$},\\
    -p^{r-1}, & \text{if $p^{r-1}\, \|\, m$},\\
    0, & \text{if $p^{r-1}\nmid m$}.
  \end{cases}
\end{equation}

(ii) $c_n(m)$ is integer-valued.

(iii) $c_n(m)$ is an {\it even} function of $m \pmod{n}$, that is, $c_n(m)=c_n\left((m,n)\right)$, for every $m,n$.
\end{corollary}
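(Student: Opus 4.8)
The plan is to deduce every assertion of Corollary~\ref{cor:Ram Mob} from Kluyver's formula in Theorem~\ref{thm:Ram Mob}, namely $c_n(m) = \sum_{d \mid (m,n)} \mu(n/d)\, d$, so that no further appeal to the roots-of-unity definition \eqref{def1} is needed.

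Two of the three parts are almost immediate. For part~(ii), I would simply observe that the right-hand side of \eqref{for:Ram Mob} is a finite sum whose terms $\mu(n/d)\, d$ are products of integers, so $c_n(m)\in\Z$. For part~(iii), I would note that the right-hand side of \eqref{for:Ram Mob} depends on $m$ only through the index set $\{d : d \mid (m,n)\}$, hence only through $(m,n)$; since $((m,n),n) = (m,n)$, replacing $m$ by $(m,n)$ leaves this set unchanged and yields $c_n(m) = c_n((m,n))$.

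The core of the work is part~(i). For multiplicativity in $n$, I would fix $m$, take $(n_1,n_2)=1$, and use the unique factorization of a divisor $d \mid n_1 n_2$ as $d = d_1 d_2$ with $d_1 \mid n_1$ and $d_2 \mid n_2$. Since $(d_1,d_2)=1$, one has $d \mid m$ if and only if $d_1 \mid m$ and $d_2 \mid m$, so the condition $d \mid (m, n_1 n_2)$ splits as $d_1 \mid (m,n_1)$ and $d_2 \mid (m,n_2)$; moreover $\mu(n_1 n_2/d) = \mu(n_1/d_1)\mu(n_2/d_2)$ by the multiplicativity of $\mu$, as $(n_1/d_1, n_2/d_2)=1$. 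Substituting into \eqref{for:Ram Mob} factors the double sum into $c_{n_1}(m)\,c_{n_2}(m)$. For the prime power formula \eqref{Ramanujan_prime_power}, I would apply \eqref{for:Ram Mob} to $n = p^r$: among the divisors $1, p, \ldots, p^r$, the M\"obius factor $\mu(p^r/d)$ vanishes unless $d = p^{r-1}$ or $d = p^r$, leaving at most the two terms $\mu(p)\,p^{r-1} = -p^{r-1}$ and $\mu(1)\,p^r = p^r$. A case split on whether $p^r \mid m$, $p^{r-1}\, \|\, m$, or $p^{r-1}\nmid m$ then selects which of these survive the divisibility constraint $d \mid m$, producing $p^r - p^{r-1}$, $-p^{r-1}$, and $0$ respectively.

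The parenthetical claim that $m \mapsto c_n(m)$ is multiplicative exactly when $\mu(n)=1$ I would handle separately. Necessity is immediate: a nonzero multiplicative function is normalized at $1$, and since $c_n(0)=\varphi(n)\neq 0$ the function $m\mapsto c_n(m)$ is not identically zero, so $c_n(1)=1$ is forced; but $c_n(1)=\mu(n)$, whence $\mu(n)=1$. For sufficiency, $\mu(n)=1$ forces $n$ to be squarefree with an even number of prime divisors, so by part~(i) and the case $r=1$ of \eqref{Ramanujan_prime_power} one has $c_n(m)=\prod_{p\mid n}c_p(m)$ with each factor equal to $p-1$ or $-1$; for coprime $m_1,m_2$, sorting the primes $p\mid n$ according to whether they divide $m_1$, $m_2$, or neither, and tallying the resulting signs, reduces the identity $c_n(m_1m_2)=c_n(m_1)c_n(m_2)$ to the fact that $n$ has an even number of prime divisors, which is precisely what $\mu(n)=1$ encodes. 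I expect the only genuine friction to lie in the bookkeeping for part~(i)---making sure the coprime factorization $d=d_1d_2$ is simultaneously compatible with the divisibility condition $d\mid m$ and with the multiplicative splitting of $\mu(n/d)$---together with the sign count just described; all remaining assertions are direct substitutions into Theorem~\ref{thm:Ram Mob}.
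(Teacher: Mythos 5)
Your proposal is correct and follows exactly the route the paper intends: the paper states Corollary~\ref{cor:Ram Mob} without any proof, presenting it as a direct consequence of Kluyver's formula (Theorem~\ref{thm:Ram Mob}), and your deductions of (i)--(iii) and of the parenthetical criterion $\mu(n)=1$ from that formula supply precisely the omitted details. All steps check out, including the sign-counting argument for sufficiency in the parenthetical claim, which correctly reduces to $n$ having an even number of prime factors.
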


The {\it von Sterneck number} (\cite{von}) is defined by 

\begin{align}\label{def3}
\Phi(m,n)=\frac{\varphi (n)}{\varphi \left(\frac{n}{\left(m,n\right)}\right)}\mu \!\left(\frac{n}{\left(m,n\right)} \right).
\end{align}

A crucial fact in studying Ramanujan sums and their applications is that they coincide with the von Sterneck number. This result is known as {\it von Sterneck's formula} and is attributed to 
Kluyver~\cite{KLU}:

\begin{theorem} \label{thm:von rama}
For integers $m$ and $n$, with $n \geq 1$, we have
\begin{align}\label{von rama for}
\Phi(m,n)=c_n(m).
\end{align}
\end{theorem}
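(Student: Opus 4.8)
The plan is to prove the identity $\Phi(m,n)=c_n(m)$ by reducing to prime powers via multiplicativity, and then matching against the explicit prime-power values of $c_n(m)$ recorded in Corollary~\ref{cor:Ram Mob}(i). Fix $m\in\Z$ throughout, and regard both sides as functions of $n$.

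First I would check that the von Sterneck number $n\mapsto\Phi(m,n)$ is multiplicative. If $(n_1,n_2)=1$, then the gcd splits as $(m,n_1n_2)=(m,n_1)(m,n_2)$, so the two quotients $n_1/(m,n_1)$ and $n_2/(m,n_2)$ divide $n_1$ and $n_2$ respectively and are therefore coprime, with product equal to $n_1n_2/(m,n_1n_2)$. Since $\varphi$ and $\mu$ are both multiplicative, each of the three factors in the definition \eqref{def3} factors accordingly, giving $\Phi(m,n_1n_2)=\Phi(m,n_1)\Phi(m,n_2)$. On the other hand, $n\mapsto c_n(m)$ is already known to be multiplicative by Corollary~\ref{cor:Ram Mob}(i). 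Hence it suffices to verify \eqref{von rama for} when $n=p^r$ is a prime power.

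For the prime-power case I would evaluate $\Phi(m,p^r)$ according to the exact power of $p$ dividing $m$ and compare with the three-line formula \eqref{Ramanujan_prime_power}. If $p^r\mid m$ then $(m,p^r)=p^r$, so $p^r/(m,p^r)=1$ and $\Phi(m,p^r)=\varphi(p^r)=p^r-p^{r-1}$. If $p^{r-1}\,\|\,m$ then $(m,p^r)=p^{r-1}$, so $p^r/(m,p^r)=p$ and $\Phi(m,p^r)=\varphi(p^r)\mu(p)/\varphi(p)=-p^{r-1}$. Finally, if $p^{r-1}\nmid m$ then $(m,p^r)=p^s$ with $s\le r-2$, so $p^r/(m,p^r)=p^{r-s}$ with exponent $r-s\ge 2$; this argument is not square-free, hence $\mu(p^{r-s})=0$ and $\Phi(m,p^r)=0$. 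In all three cases the value agrees with $c_{p^r}(m)$ in \eqref{Ramanujan_prime_power}, which together with multiplicativity establishes the claim.

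The prime-power computations are routine once the framework is in place; the one place to be careful is the multiplicativity step, specifically the splitting $(m,n_1n_2)=(m,n_1)(m,n_2)$ and the verification that the resulting quotients are coprime, since the whole reduction rests on it. An alternative I would keep in reserve is a purely arithmetic derivation directly from Kluyver's formula \eqref{for:Ram Mob}: writing $g=(m,n)$ and manipulating $\sum_{d\,\mid\,g}\mu(n/d)d$ into the closed form $\varphi(n)\mu(n/g)/\varphi(n/g)$. This route avoids the multiplicativity bookkeeping but is more cumbersome, so I expect the multiplicative argument to be the cleaner of the two.
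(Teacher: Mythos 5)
Your proof is correct, but note that the paper itself offers no proof of this theorem: it is stated as a known result (von Sterneck's formula) and attributed to Kluyver's 1906 paper, so there is no internal argument to compare against. What you have done is supply a self-contained proof from the paper's own earlier material, and it holds up. The multiplicativity step is sound: for $(n_1,n_2)=1$ one indeed has $(m,n_1n_2)=(m,n_1)(m,n_2)$, the two quotients $n_1/(m,n_1)$ and $n_2/(m,n_2)$ are coprime divisors of $n_1$ and $n_2$ with product $n_1n_2/(m,n_1n_2)$, and the multiplicativity of $\varphi$ and $\mu$ then splits all three factors in \eqref{def3}. Crucially, there is no circularity: the multiplicativity of $n\mapsto c_n(m)$ and the prime-power values \eqref{Ramanujan_prime_power} in Corollary~\ref{cor:Ram Mob}(i) are derived in the paper from Kluyver's formula \eqref{for:Ram Mob} (Theorem~\ref{thm:Ram Mob}), not from the identity you are proving. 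Your three prime-power cases match \eqref{Ramanujan_prime_power} exactly: $\varphi(p^r)=p^r-p^{r-1}$ when $p^r\mid m$; $\varphi(p^r)\mu(p)/\varphi(p)=-p^{r-1}$ when $p^{r-1}\,\|\,m$; and $\mu(p^{r-s})=0$ for $r-s\ge 2$ when $p^{r-1}\nmid m$. The only cosmetic omission is the trivial base case $n=1$, where both sides equal $1$. Your alternative route, massaging $\sum_{d\mid (m,n)}\mu(n/d)\,d$ directly into closed form, would also work and is essentially how the identity is classically derived from \eqref{for:Ram Mob}, but the multiplicative reduction you chose is cleaner given that Corollary~\ref{cor:Ram Mob}(i) is already available.
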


Ramanujan sums satisfy several important {\it orthogonality properties}. One of them is the following identity:
\begin{theorem} {\rm (\cite{COH1})} \label{thm:Orth1}
If $n \geq 1$, $d_1 \mid n$, and $d_2 \mid n$, then we have
\begin{align}\label{Orth1 for}
\mathlarger{\sum}_{d\, \mid\, n} c_{d_1}\! \left(\frac{n}{d}\right)c_{d}\! \left(\frac{n}{d_2}\right)&=
  \begin{cases}
    n, & \text{if $d_1=d_2$},\\
    0, & \text{if $d_1\not=d_2$}.
  \end{cases}
\end{align}
\end{theorem}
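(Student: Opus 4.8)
The plan is to expand only the first factor $c_{d_1}(n/d)$ using Kluyver's formula (Theorem~\ref{thm:Ram Mob}), interchange the order of summation, and then collapse the resulting inner sum by the fundamental orthogonality relation \eqref{Orth1 for cons}. Concretely, I would write
\[
c_{d_1}\!\left(\frac{n}{d}\right) = \sum_{\substack{e\, \mid\, d_1 \\ e\, \mid\, n/d}} \mu\!\left(\frac{d_1}{e}\right) e,
\]
substitute this into the left-hand side, and swap the $d$- and $e$-summations. Since the constraint $e \mid n/d$ is equivalent to $d \mid n/e$ (and any $d \mid n/e$ automatically divides $n$ because $e \mid d_1 \mid n$), the inner sum over $d$ becomes $\sum_{d\, \mid\, n/e} c_d(n/d_2)$, which is precisely of the shape handled by \eqref{Orth1 for cons} with modulus $n/e$ and fixed argument $n/d_2$.

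The crucial point is then that \eqref{Orth1 for cons} evaluates this inner sum to $n/e$ exactly when $(n/e) \mid (n/d_2)$, i.e. when $d_2 \mid e$, and to $0$ otherwise. The factor $n/e$ cancels the $e$ produced by Kluyver's formula, leaving
\[
\mathlarger{\sum}_{d\, \mid\, n} c_{d_1}\!\left(\frac{n}{d}\right) c_d\!\left(\frac{n}{d_2}\right) = n \mathlarger{\sum}_{\substack{e\, \mid\, d_1 \\ d_2\, \mid\, e}} \mu\!\left(\frac{d_1}{e}\right).
\]
If $d_2 \nmid d_1$, the index set $\{e : d_2 \mid e \mid d_1\}$ is empty and the sum vanishes (and in this case $d_1 \neq d_2$, as required). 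Otherwise I would put $e = d_2 e'$ with $e' \mid d_1/d_2$, so that $\mu(d_1/e) = \mu\bigl((d_1/d_2)/e'\bigr)$, and recognize the remaining sum as $\sum_{e'\, \mid\, (d_1/d_2)} \mu\bigl((d_1/d_2)/e'\bigr)$. By the characteristic property of the M\"obius function \eqref{Mob_char}, this equals $1$ if $d_1 = d_2$ and $0$ if $d_1/d_2 > 1$, which gives $n$ in the diagonal case and $0$ in every off-diagonal case, as claimed.

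I expect the only genuine obstacle to be the divisibility bookkeeping: correctly tracking the nested conditions $e \mid d_1$, $e \mid n/d$, $d \mid n$ through the interchange of summation, and translating $(n/e) \mid (n/d_2)$ into the clean condition $d_2 \mid e$ so that \eqref{Orth1 for cons} can be invoked with the right modulus. Everything after that is formal. An alternative would be to expand both Ramanujan sums directly as exponential sums and appeal to \eqref{exp_prop}, but the two differing moduli $d_1$ and $d$ make that route appreciably more tangled, so I would favour the approach via Theorem~\ref{thm:Ram Mob} and \eqref{Orth1 for cons} outlined above.
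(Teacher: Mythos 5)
Your proof is correct, and every step checks out: the interchange of summations is legitimate; for $e \mid d_1 \mid n$ the pair of conditions $d \mid n$, $e \mid n/d$ is indeed equivalent to the single condition $d \mid n/e$; the translation $(n/e) \mid (n/d_2) \iff d_2 \mid e$ is right; and the final reindexing $e = d_2 e'$ correctly reduces the sum to $\sum_{\delta \mid d_1/d_2} \mu(\delta)$, which \eqref{Mob_char} evaluates to the desired indicator. There is, however, nothing in the paper to compare against: Theorem~\ref{thm:Orth1} is stated there as a quoted result of Cohen \cite{COH1}, with no proof supplied. So your argument fills a gap rather than duplicating or diverging from the text, and it does so economically, using only ingredients the paper has already put on the table --- Kluyver's formula (Theorem~\ref{thm:Ram Mob}), the averaged orthogonality relation \eqref{Orth1 for cons}, and the characteristic property of the M\"obius function \eqref{Mob_char}. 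This is arguably preferable to Cohen's original treatment (which proceeds through his general theory of even functions $\pmod n$), and also to the alternative you mention of expanding both Ramanujan sums as exponential sums, which, as you anticipate, gets tangled because of the two different moduli. One small presentational suggestion: when you invoke \eqref{Orth1 for cons} with modulus $n/e$ and argument $n/d_2$, note explicitly that both are positive integers (since $e \mid n$ and $d_2 \mid n$), so the hypothesis ``$m, n \geq 1$'' of that identity is met.
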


We close this subsection by mentioning that, very recently, Fowler et al. \cite{FGK} showed that many properties of Ramanujan sums can be deduced (with very short proofs!) using the theory of {\it supercharacters} (from group theory), recently developed by Diaconis-Isaacs and Andr\'{e}.

\subsection{The discrete Fourier transform}

A function $f:\Z \to \C$ is called {\it periodic} with period $n$ (also called {\it $n$-periodic} or {\it periodic} modulo $n$) if $f(m + n) = f(m)$, for every $m\in \mathbb{Z}$. In this case $f$ is determined by the finite vector $(f(1),\ldots,f(n))$. From (\ref{def1}) it is clear that $c_n(m)$ is a periodic function of $m$ with period $n$.

We define the {\it discrete Fourier transform} (DFT) of an $n$-periodic function $f$ as the function 
$\widehat{f}={\cal F}(f)$, given by
\begin{align}\label{FFT1}
\widehat{f}(b)=\mathlarger{\sum}_{j=1}^{n}f(j)e\! \left(\frac{-bj}{n}\right)\quad (b\in \Z).
\end{align}

The standard representation of $f$ is obtained from the Fourier representation $\widehat{f}$ by
\begin{align}\label{FFT2}
f(b)=\frac1{n} \mathlarger{\sum}_{j=1}^{n}\widehat{f}(j)e\!\left(\frac{bj}{n}\right) \quad (b\in \Z),
\end{align}
which is the {\it inverse discrete Fourier transform} (IDFT); see, e.g., \cite[p.\ 109]{MOVA}.

The {\it Cauchy convolution} of the $n$-periodic functions $f_1$ and $f_2$ is the $n$-periodic function $f_1\otimes f_2$ defined by
\begin{equation*}
(f_1\otimes f_2)(m)= \sum_{\substack{1\leq x_1,x_2\leq n \\ x_1+x_2\equiv m \text{ {\rm (mod $n$)}}}} f_1(x_1)f_2(x_2) =
\sum_{x=1}^n f_1(x)f_2(m-x)  \quad (m\in \Z).
\end{equation*}

It is well known that $$\widehat{f_1\otimes f_2} = \widehat{f_1}\widehat{f_2},$$ with pointwise multiplication. More generally, if
$f_1,\ldots,f_k$ are $n$-periodic functions, then
\begin{equation} \label{Cauchy_prop}
{\cal F}(f_1\otimes \cdots \otimes f_k) = {\cal F}(f_1) \cdots {\cal F}(f_k).
\end{equation}

For $t\mid n$, let $\varrho_{n,t}$ be the $n$-periodic function defined for every $m\in \Z$ by
\begin{equation*}
\varrho_{n,t}(m)=   \begin{cases}
    1, & \text{if $(m,n)=t$},\\
    0, & \text{if $(m,n)\neq t$}.
  \end{cases}
\end{equation*}

We will need the next two results. The first one is a direct consequence of the definitions.

\begin{theorem} \label{Th_varrho}  For every $t\mid n$,
\begin{equation*}
\widehat{\varrho_{n,t}}(m)= c_{\frac{n}{t}}(m) \quad (m\in \Z),
\end{equation*}
in particular, the Ramanujan sum $m\mapsto c_n(m)$ is the DFT of the function $m\mapsto \varrho_{n,1}(m)$.
\end{theorem}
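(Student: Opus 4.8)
The plan is to compute the transform straight from its definition and collapse the resulting exponential sum onto a Ramanujan sum by a single change of summation variable. First I would unfold the DFT~(\ref{FFT1}): since $\varrho_{n,t}$ is by construction the indicator of the condition $(j,n)=t$, only those residues survive, so
\begin{equation*}
\widehat{\varrho_{n,t}}(m) = \sum_{j=1}^{n} \varrho_{n,t}(j)\, e\!\left(\frac{-mj}{n}\right) = \sum_{\substack{j=1 \\ (j,n)=t}}^{n} e\!\left(\frac{-mj}{n}\right).
\end{equation*}

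The key step is the substitution $j = t j'$. Because $t\mid n$, an integer $j\in\{1,\dots,n\}$ has $(j,n)=t$ exactly when $j$ is a multiple $tj'$ of $t$ whose cofactor satisfies $(j',n/t)=1$; and as $j$ ranges over the residues with $(j,n)=t$, the cofactor $j'$ ranges bijectively over $\{1,\dots,n/t\}$ with $(j',n/t)=1$. Under this substitution the exponent simplifies via $-mtj'/n = -mj'/(n/t)$, giving
\begin{equation*}
\widehat{\varrho_{n,t}}(m) = \sum_{\substack{j'=1 \\ (j', n/t)=1}}^{n/t} e\!\left(\frac{-mj'}{n/t}\right).
\end{equation*}

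Comparing this with the definition~(\ref{def1}) of the Ramanujan sum, the right-hand side is precisely $c_{n/t}(-m)$. Invoking the evenness identity $c_N(-m)=c_N(m)$ recorded immediately after~(\ref{def1}), now with $N=n/t$, yields $\widehat{\varrho_{n,t}}(m)=c_{n/t}(m)$, as claimed. The ``in particular'' assertion is simply the case $t=1$, where $\varrho_{n,1}$ is the indicator of $(m,n)=1$ and $n/t=n$, so $\widehat{\varrho_{n,1}}=c_n$.

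I do not expect a genuine obstacle here, since this is really a direct calculation; the only point needing care is verifying that $j\mapsto j'=j/t$ is a bijection between the two index sets, and this is exactly where the hypothesis $t\mid n$ enters. One could alternatively phrase the same reduction without evenness by observing that replacing the summation variable $j'$ by $n/t - j'$ (or by noting that the sum is real) identifies the conjugate exponential sum with $c_{n/t}(m)$, but the shortest route is the evenness identity already noted in the text.
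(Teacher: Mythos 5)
Your proof is correct and matches the paper's approach: the paper states this result as ``a direct consequence of the definitions,'' and your computation --- unfolding the DFT, substituting $j = tj'$ using $t \mid n$, and invoking $c_{n/t}(-m) = c_{n/t}(m)$ --- is precisely that direct verification, spelled out. Nothing is missing; the bijection $j \mapsto j/t$ and the evenness step are exactly the points that make the ``direct consequence'' go through.
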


As already mentioned in Corollary~\ref{cor:Ram Mob}(iii), a function $f:\Z \to \C$ is called $n$-even, or even (mod $n$), if
$f(m)=f((m,n))$, for every $m\in \Z$. Clearly, if a function $f$ is $n$-even, then it is $n$-periodic.
The Ramanujan sum $m\mapsto c_n(m)$ is an example of an $n$-even function.

\begin{theorem} {\rm (\cite[Prop.\ 2]{TOTHAU})} \label{Th_TOTHAU}  If $f$ is an $n$-even function, then
\begin{equation*}
\widehat{f}(m)= \sum_{d\, \mid\, n} f(d) c_{\frac{n}{d}}(m) \quad (m\in \Z).
\end{equation*}
\end{theorem}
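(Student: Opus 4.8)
The plan is to exploit the linearity of the discrete Fourier transform together with the explicit transforms of the indicator functions $\varrho_{n,d}$ already recorded in Theorem~\ref{Th_varrho}. Since $f$ is $n$-even, its value at any $m$ depends only on $(m,n)$, so $f$ is constant on each of the classes $\{m : (m,n) = d\}$ as $d$ ranges over the divisors of $n$. This immediately suggests writing $f$ as a linear combination of the characteristic functions of these classes.

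Concretely, first I would verify the pointwise identity
$$f = \sum_{d\, \mid\, n} f(d)\, \varrho_{n,d},$$
which holds because, for each $m$, exactly one divisor $d = (m,n)$ satisfies $\varrho_{n,d}(m) = 1$ while all other terms vanish, and on that class $f(m) = f((m,n)) = f(d)$ by $n$-evenness. Applying the linear operator $\mathcal{F}$ to both sides and using Theorem~\ref{Th_varrho}, which gives $\widehat{\varrho_{n,d}}(m) = c_{n/d}(m)$, yields
$$\widehat{f}(m) = \sum_{d\, \mid\, n} f(d)\, \widehat{\varrho_{n,d}}(m) = \sum_{d\, \mid\, n} f(d)\, c_{\frac{n}{d}}(m),$$
which is exactly the claim.

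Alternatively, one can argue directly from the definition \eqref{FFT1}: group the sum $\widehat{f}(m) = \sum_{j=1}^n f(j) e(-mj/n)$ according to the value $d = (j,n)$, use $n$-evenness to replace $f(j)$ by $f(d)$, and substitute $j = d j'$ with $(j', n/d) = 1$ to turn the inner sum into $\sum_{(j', n/d)=1} e(-m j'/(n/d)) = c_{n/d}(-m)$; then invoke $c_n(-m) = c_n(m)$ to finish. The only point requiring care --- the main, though minor, obstacle --- is the reindexing $j \mapsto j'$ and checking that it gives a bijection between $\{1 \le j \le n : (j,n) = d\}$ and the reduced residues modulo $n/d$, so that the inner sum is genuinely a Ramanujan sum in the form \eqref{def1}. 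The first approach sidesteps this entirely by packaging it inside Theorem~\ref{Th_varrho}, which is why I would present that as the primary proof.
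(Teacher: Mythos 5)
Your proposal is correct, and in fact your ``alternative'' argument is precisely the paper's proof: the paper disposes of the theorem in one line by grouping the terms of \eqref{FFT1} according to the value of the gcd of the summation index with $n$ and invoking $n$-evenness, leaving the reindexing $j=dj'$ with $(j',n/d)=1$ implicit. Your primary route is a repackaging of that same computation: writing $f=\sum_{d\mid n} f(d)\,\varrho_{n,d}$ and applying linearity of ${\cal F}$ together with Theorem~\ref{Th_varrho} simply shifts the grouping-and-reindexing step into Theorem~\ref{Th_varrho} (which the paper calls a direct consequence of the definitions --- i.e., the very same grouping, specialized to indicator functions). What your packaging buys is modularity and a proof that parallels the paper's own proof of Theorem~\ref{thm:k var1}, which likewise works with the functions $\varrho_{n,t}$ and DFT identities rather than with raw exponential sums; what the paper's version buys is brevity, at the cost of leaving to the reader the bijection between $\{1\le j\le n : (j,n)=d\}$ and the reduced residues modulo $n/d$ and the evenness $c_{n/d}(-m)=c_{n/d}(m)$ --- exactly the points you flag as needing care. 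Both arguments are complete and correct.
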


\begin{proof} Group the terms of \eqref{FFT1} according to the values $d=(m,n)$, taking into account the definition of the $n$-even functions.
\end{proof}

\section{Linear congruences with $(x_i,n)=t_i$ ($1\leq i\leq k$)}\label{Sec_3}

In this section, using properties of Ramanujan sums and of the discrete Fourier transform of arithmetic functions, we derive an explicit formula for the number of solutions of the restricted linear congruence \eqref{gen_rest_cong} for arbitrary integers $a_1,t_1,\ldots,a_k,t_k, b,n$ ($n\geq 1$).

Let us start with the case that we have only one variable; this is a building block for the case of $k$ variables ($k\geq 1$). The following theorem generalizes the main result of \cite{GP}, one of the main results of \cite{ACP}, and also a key lemma in \cite{NV} (Lemma 1).

\begin{theorem} \label{thm:one var}
Let $a$, $b$, $n\geq 1$ and $t\geq 1$ be given integers. The congruence $ax \equiv b \pmod{n}$ has solution(s) $x$ with $(x,n)=t$ if and only
if $t\mid(b,n)$ and $\left(a,\frac{n}{t}\right)=\left(\frac{b}{t},\frac{n}{t}\right)$. Furthermore, if these conditions are satisfied,
then there are exactly
\begin{equation} \label{number_one_var}
\frac{\varphi\left(\frac{n}{t}\right)}{\varphi\left(\frac{n}{td}\right)}= d \prod_{\substack{p\, \mid\, d\\ p\, \nmid\, \frac{n}{td}}}
\left(1-\frac1{p} \right)
\end{equation}
solutions, where $p$ ranges over the primes and $d = \left(a,\frac{n}{t}\right)=\left(\frac{b}{t},\frac{n}{t}\right)$.
\end{theorem}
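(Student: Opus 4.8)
The plan is to reduce the one-variable restricted congruence to an ordinary coprimality count and then read off the number of solutions from the fiber structure of the reduction map on unit groups. First I would substitute $x = ty$. Since any solution must satisfy $t = (x,n) \mid n$, I may assume $t \mid n$ throughout (otherwise there is trivially no solution), and writing $n = t\cdot\frac{n}{t}$ the identity $(ty,n) = t\,(y,\frac{n}{t})$ shows that $(x,n)=t$ holds precisely when $(y,\frac{n}{t})=1$. Substituting into $ax\equiv b\pmod n$ gives $aty\equiv b\pmod n$; since $t$ divides both $n$ and $aty$, solvability forces $t\mid b$, so together with $t\mid n$ one obtains the stated condition $t\mid(b,n)$. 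Dividing the congruence through by $t$ then reduces the problem to counting $y$ modulo $N:=\frac{n}{t}$ with $(y,N)=1$ satisfying
\[
ay \equiv B \pmod{N}, \qquad B := \tfrac{b}{t}.
\]

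Next I would establish the second solvability condition together with its necessity. If $y$ is any solution with $(y,N)=1$, then multiplication by the unit $y$ does not change the gcd with $N$, so $(ay,N)=(a,N)$; and since $ay\equiv B\pmod N$ means $N\mid(ay-B)$, I get $(B,N)=(ay,N)=(a,N)$, proving that $(a,\frac{n}{t})=(\frac{b}{t},\frac{n}{t})$ is necessary. For the converse, set $d:=(a,N)=(B,N)$ and write $a=da_1$, $N=dN_1$, $B=dB_1$ with $(a_1,N_1)=1$ and, crucially, $(B_1,N_1)=1$ (the latter because $(B,N)=d$). The congruence becomes $a_1y\equiv B_1\pmod{N_1}$, which has the unique solution $y_0\equiv a_1^{-1}B_1\pmod{N_1}$, and $(y_0,N_1)=(B_1,N_1)=1$ since $a_1$ is a unit mod $N_1$. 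Hence the full solution set modulo $N$ is exactly the coset $\{y : y\equiv y_0\pmod{N_1}\}$, consisting of the $d$ lifts $y_0+N_1 j$, $0\le j<d$.

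It then remains to count how many of these lifts are coprime to $N$, and this is where I expect the only real bookkeeping. I would argue that the reduction homomorphism $(\Z/N)^\times \to (\Z/N_1)^\times$ is surjective (as $N_1\mid N$) with all fibers of equal size $\varphi(N)/\varphi(N_1)$; since $(y_0,N_1)=1$, the solutions coprime to $N$ are precisely the fiber over $y_0$, giving exactly $\varphi(N)/\varphi(N_1)=\varphi(\frac{n}{t})/\varphi(\frac{n}{td})$ of them. Finally, expanding $\varphi(dN_1)/\varphi(N_1)$ by multiplicativity of $\varphi$ yields $d\prod_{p\mid d,\,p\nmid N_1}(1-\frac1p)$, which is the claimed product formula once $N_1=\frac{n}{td}$ is substituted. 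The main obstacle lies entirely in this last step: coprimality to $N=dN_1$ is \emph{not} the same as coprimality to $d$ and to $N_1$ separately, since these factors need not be coprime, so the clean fiber-counting argument via the unit-group homomorphism is exactly what makes the value $\varphi(N)/\varphi(N_1)$ fall out without a case analysis on shared prime factors.
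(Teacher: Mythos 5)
Your proof is correct, and its overall reduction (substitute $x=ty$, divide by $t$, pass to the core congruence modulo $n/(td)$ with its unique solution $y_0$, then count lifts) matches the paper's; the genuine difference is in how the lifts are counted. The paper writes the candidate solutions as $y_0+kN_1$, $0\le k\le d-1$ (its $N$ is your $N_1=n/(td)$), notes that coprimality to $n/t$ reduces to $(y_0+kN_1,d)=1$, and evaluates the count by an explicit M\"{o}bius inclusion--exclusion computation, $\sum_{\delta\mid d,\,(\delta,N_1)=1}\mu(\delta)\,d/\delta=d\prod_{p\mid d,\,p\nmid N_1}\left(1-\frac1p\right)$, which it then identifies with $\varphi(n/t)/\varphi(n/(td))$. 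You instead recognize the admissible lifts as the fiber over $y_0$ of the reduction homomorphism $(\Z/N\Z)^\times\to(\Z/N_1\Z)^\times$ and obtain $\varphi(N)/\varphi(N_1)$ in one stroke from surjectivity and equality of fiber sizes, deriving the product form afterwards; this also gives existence for free, since the fiber is nonempty. What your route buys is conceptual economy and, as you note, no prime-by-prime case analysis; what it costs is reliance on the lemma that every unit modulo $N_1$ lifts to a unit modulo $N$ whenever $N_1\mid N$ --- true and standard, but its usual proof is precisely the CRT/sieving argument that the paper performs explicitly, so you should cite it or include its short proof to make your argument self-contained.
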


\begin{proof}
Assume that there is a solution $x$ satisfying $ax \equiv b \pmod{n}$ and $(x,n)=t$. Then $(ax,n)=(b,n)=td$, for some $d$.
Thus, $t\mid (b,n)$ and $\left(\frac{ax}{t},\frac{n}{t}\right)=\left(\frac{b}{t},\frac{n}{t}\right)=d$. But
since $\left(\frac{x}{t},\frac{n}{t}\right)=1$, we have $\left(a,\frac{n}{t}\right)=\left(\frac{b}{t},\frac{n}{t}\right)=d$.

Now, let $t\mid (b,n)$ and $\left(a,\frac{n}{t}\right)=\left(\frac{b}{t},\frac{n}{t}\right)=d$. Let us denote
$A=\frac{a}{d}$, $B=\frac{b}{dt}$, $N=\frac{n}{dt}$. Then $(A,N)=(B,N)=1$. Since $(A,N)=1$, the congruence $Ay \equiv B \pmod{N}$ has
a unique solution $y_0=A^{-1}B$ modulo $N$ and $(Ay_0,N)=(B,N)$, that is $(y_0,N)=1$. It follows that $a(ty_0) \equiv b \pmod{n}$, which shows that $x_0=ty_0$ is a solution of $ax \equiv b \pmod{n}$.

If $x$ is such that $ax \equiv b \pmod{n}$ and $(x,n)=t$, then $x=ty$ and $Ay \equiv B \pmod{N}$. Hence, all solutions of the congruence $ax \equiv b \pmod{n}$ with $(x,n)=t$ have the form $x=t(y_0+kN)$, where $0\leq k\leq d-1$ and $\left(y_0+kN,\frac{n}{t}\right)=1$. Since $(y_0,N)=1$, the latter condition is equivalent to $(y_0+kN,d)=1$. The number $S$ of such solutions, using the characteristic property \eqref{Mob_char} of the M\"{o}bius function, is
\begin{align*}
S&= \sum_{\substack{0\leq k\leq d-1 \\ (y_0+kN,d)=1}} 1\\
&= \sum_{0\leq k\leq d-1} \sum_{\delta \, \mid\, (y_0+kN,d)} \mu(\delta)\\
&= \sum_{\delta \, \mid\, d} \mu(\delta) \sum_{\substack{0\leq k\leq d-1\\ \delta \, \mid \, y_0+kN}} 1 = \sum_{\delta \, \mid\, d} \mu(\delta) \sum_{\substack{0\leq k\leq d-1\\ kN\equiv -y_0 \pmod{\delta}}} 1.
\end{align*}
Here, if $v=(N,\delta)>1$, then $v\nmid y_0$ since $(y_0,N)=1$. Thus, the congruence $kN\equiv -y_0 \pmod{\delta}$ has no solution in $k$ and the inner sum is zero. If $(N,\delta)=1$, then the same congruence has one solution in $k$ (mod $\delta$) and it has $\frac{d}{\delta}$ solutions
(mod $d$). Therefore,
$$
S=\sum_{\substack{\delta \, \mid\, d\\ (\delta,N)=1}} \mu(\delta)\frac{d}{\delta} = d\prod_{\substack{p\, \mid\, d\\p\, \nmid\, N}} \left(1-\frac1{p}\right) =\frac{\varphi(Nd)}{\varphi(N)}=
\frac{\varphi\left(\frac{n}{t}\right)}{\varphi\left(\frac{n}{td}\right)}.$$
The proof is now complete.
\end{proof}

\begin{rema}
In \cite{ACP} the authors only prove the first part of Theorem~\ref{thm:one var} in the case of $t=1$,  and apply the result in checking the integrity of their authenticated encryption scheme (\cite{ACP}). Their main result, \cite[Th. 5.11]{ACP}, is obtained via a very long argument; however, formula \eqref{number_one_var} alone gives a one-line proof for \cite[Th. 5.11]{ACP} that we omit here.
\end{rema}

\begin{corollary} \label{cor_one_sol} The congruence $ax \equiv b \pmod{n}$ has exactly one solution $x$ with $(x,n)=t$ if and only if one of the following two cases holds:

(i) $\left(a,\frac{n}{t}\right)=\left(\frac{b}{t},\frac{n}{t}\right)=1$, where $t\mid (b,n)$;

(ii) $\left(a,\frac{n}{t}\right)=\left(\frac{b}{t},\frac{n}{t}\right)=2$, where $t\mid b$, $n=2^r u$, $r\ge 1$, $u\ge 1$ odd, $t=2^{r-1}v$,
$v\mid u$.
\end{corollary}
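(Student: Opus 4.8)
The plan is to extract everything from the counting formula already established in Theorem~\ref{thm:one var} and then solve the equation ``number of solutions $=1$.'' Since having exactly one solution presupposes that a solution exists, the existence conditions of that theorem must hold, so I may assume $t\mid(b,n)$ and $d:=\left(a,\frac{n}{t}\right)=\left(\frac{b}{t},\frac{n}{t}\right)$, in which case the number of solutions $x$ with $(x,n)=t$ is
$$S=d\prod_{\substack{p\,\mid\,d\\ p\,\nmid\,\frac{n}{td}}}\left(1-\frac1p\right).$$
Thus the whole corollary reduces to deciding when $S=1$ in terms of $d$ and of the divisibility relation between the primes dividing $d$ and $\frac{n}{td}$.

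First I would split $S$ into one local factor per prime $p\mid d$. Writing $p^{\alpha_p}\,\|\,d$ with $\alpha_p\ge 1$, the prime $p$ contributes $p^{\alpha_p}$ when $p\mid\frac{n}{td}$ and $p^{\alpha_p-1}(p-1)$ when $p\nmid\frac{n}{td}$; in both cases this is a positive integer, so $S=\prod_{p\mid d}c_p$ is a product of integers each at least $1$. Consequently $S=1$ if and only if every local factor $c_p$ equals $1$.

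The core of the argument is then the elementary analysis of a single factor. If $p\mid\frac{n}{td}$ the factor is $p^{\alpha_p}\ge 2$, hence never $1$. If $p\nmid\frac{n}{td}$ the factor is $p^{\alpha_p-1}(p-1)$, which equals $1$ exactly when $\alpha_p=1$ and $p-1=1$, i.e. when $p=2$, $2\,\|\,d$, and $2\nmid\frac{n}{td}$. Since a factor equal to $1$ forces $p=2$, at most one prime can divide $d$, leaving precisely two possibilities: $d=1$ (empty product, $S=1$), which is case (i); or $d=2$ with $2\nmid\frac{n}{2t}$, equivalently $2\,\|\,\frac{n}{t}$, which I expect to become case (ii).

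The final step is bookkeeping to put $2\,\|\,\frac{n}{t}$ into the stated shape. Writing $n=2^r u$ with $u$ odd and $t=2^s v$ with $v\mid u$ and $v$ odd (legitimate because $t\mid n$), one gets $\frac{n}{t}=2^{r-s}\cdot\frac{u}{v}$ with $\frac{u}{v}$ odd, so $2\,\|\,\frac{n}{t}$ holds if and only if $s=r-1$ (which forces $r\ge 1$), i.e. $t=2^{r-1}v$ with $v\mid u$; this is exactly case (ii). Finally, since $(x,n)=t$ already forces $t\mid n$, the existence condition $t\mid(b,n)$ is there equivalent to $t\mid b$. The proof is short, and the only point demanding care is the local-factor analysis: checking that each $c_p$ is a positive integer $\ge 1$ and that $c_p=1$ can happen only for $p=2$ with $2\,\|\,d$; the rest is routine divisibility arithmetic.
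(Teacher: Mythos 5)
Your proposal is correct and follows essentially the same route as the paper: both apply the counting formula \eqref{number_one_var} from Theorem~\ref{thm:one var}, factor the count into local contributions at each prime $p \mid d$, and observe that a local factor can equal $1$ only when $p=2$, $2\,\|\,d$, and $2 \nmid \frac{n}{td}$, which yields exactly cases (i) and (ii). Your write-up is somewhat more explicit than the paper's (which compresses the localization into ``it is enough to consider $d=p^j$''), but there is no substantive difference in the argument.
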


\begin{proof} Let $d=\left(a,\frac{n}{t}\right)=\left(\frac{b}{t},\frac{n}{t}\right)$. If $d=1$, then \eqref{number_one_var} shows that there is one solution. Now for $d>1$ it is enough to consider the case when $d=p^j$ ($j\ge 1$) is a prime power. Let $p^r\, \|\, n$, $p^s\, \|\, t$ with $0\leq j+s\leq r$. Then, by \eqref{number_one_var}, there is one solution if $p^j\left(1-\frac1{p}\right)=1$ provided that $p\nmid p^{r-s-j}$. This holds only in the case $p=2$, $j=1$, $s+j=r$. This gives $d=2$ together with the conditions formulated in (ii).
\end{proof}

\noindent{We remark that Corollary~\ref{cor_one_sol}, in the case of $t=1$, was obtained in \cite[Cor.\ 4]{GP}.}

Now we deal with the case of $k$ variables ($k\geq 1$). Assume $a_1,\ldots,a_k,b$ are fixed and let $N_n(t_1,\ldots,t_k)$ denote the number of incongruent solutions of \eqref{gen_rest_cong}. We note the following multiplicativity property: If $n,m\ge 1$, $(n,m)=1$, then
\begin{equation} \label{multipl_prop}
N_{nm}(t_1,\ldots,t_k) = N_n(u_1,\ldots,u_k) N_m(v_1,\ldots,v_k),
\end{equation}
with unique $u_i, v_i$ such that $t_i=u_iv_i$, $u_i\mid n$, $v_i\mid m$ ($1\leq i\leq k$). This can be easily shown by the Chinese remainder theorem. Therefore, it would be enough to obtain $N_n(t_1,\ldots,t_k)$ in the case $n=p^r$, a prime power. However, we prefer to derive the next compact results, which are valid for an arbitrary positive integer $n$.
\vspace*{2mm}

In the case that $a_i=1$ ($1\leq i\leq k$), we prove the following result:

\begin{theorem} \label{thm:k var1}
Let $b$, $n\geq 1$, $t_i\mid n$ {\rm ($1\leq i\leq k$)} be given integers. The number of solutions of the linear congruence $x_1+\cdots +x_k\equiv b \pmod{n}$, with $(x_i,n)=t_i$ {\rm ($1\leq i\leq k$)}, is
\begin{align}\label{thm:k var1 for}
N_n(b;t_1,\ldots,t_k)= \frac{1}{n}\mathlarger{\sum}_{d\, \mid\, n} c_d(b)\mathlarger{\prod}_{i=1}^{k}c_{\frac{n}{t_i}}\!
\left(\frac{n}{d}\right)\geq 0.
\end{align}
\end{theorem}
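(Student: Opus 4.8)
The plan is to recognize the counting function as a $k$-fold Cauchy convolution of indicator functions and then read off the formula via the discrete Fourier transform. First I would observe that, for fixed $b$ and $t_1,\ldots,t_k$, the quantity counts exactly the tuples built from the supports of the functions $\varrho_{n,t_i}$:
$$N_n(b;t_1,\ldots,t_k) = \sum_{\substack{1\le x_1,\ldots,x_k\le n\\ x_1+\cdots+x_k\equiv b\ (\mathrm{mod}\ n)}} \varrho_{n,t_1}(x_1)\cdots \varrho_{n,t_k}(x_k) = \bigl(\varrho_{n,t_1}\otimes\cdots\otimes \varrho_{n,t_k}\bigr)(b),$$
since $\varrho_{n,t_i}(x_i)$ is precisely the indicator of the condition $(x_i,n)=t_i$, and the Cauchy convolution collects exactly those $k$-tuples summing to $b$ modulo $n$. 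In particular $N_n$ is $n$-periodic in $b$, and being a count of solutions it is automatically a nonnegative integer, which disposes of the inequality in (\ref{thm:k var1 for}).

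Next I would transform. Applying the multiplicativity of the DFT under Cauchy convolution (\ref{Cauchy_prop}) together with Theorem~\ref{Th_varrho}, which gives $\widehat{\varrho_{n,t_i}}(m)=c_{n/t_i}(m)$, yields
$$\mathcal{F}(\varrho_{n,t_1}\otimes\cdots\otimes\varrho_{n,t_k})(m) = \prod_{i=1}^k \widehat{\varrho_{n,t_i}}(m) = \prod_{i=1}^k c_{\frac{n}{t_i}}(m).$$
Inverting via the IDFT (\ref{FFT2}) then expresses the count as
$$N_n(b;t_1,\ldots,t_k) = \frac1n\sum_{j=1}^n \Bigl(\prod_{i=1}^k c_{\frac{n}{t_i}}(j)\Bigr)\, e\!\left(\frac{bj}{n}\right).$$

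The remaining task --- and the one step requiring a little care --- is to turn this sum over residues $j$ into the divisor sum appearing in (\ref{thm:k var1 for}). The key observation is that $f^*(j):=\prod_{i=1}^k c_{n/t_i}(j)$ is an $n$-even function of $j$: each factor is $\tfrac{n}{t_i}$-even by Corollary~\ref{cor:Ram Mob}(iii), and since $\tfrac{n}{t_i}\mid n$ one has $(j,\tfrac{n}{t_i})=((j,n),\tfrac{n}{t_i})$, so every factor, and hence the product, depends on $j$ only through $(j,n)$. I would therefore group the terms of the inverse transform according to the value $d=(j,n)$: writing $j=dj'$ with $(j',n/d)=1$ turns the inner exponential sum into $c_{n/d}(b)$ by the very definition (\ref{def1}) of the Ramanujan sum, giving
$$N_n(b;t_1,\ldots,t_k) = \frac1n\sum_{d\,\mid\,n} f^*(d)\, c_{\frac{n}{d}}(b) = \frac1n\sum_{d\,\mid\,n} \Bigl(\prod_{i=1}^k c_{\frac{n}{t_i}}(d)\Bigr) c_{\frac{n}{d}}(b).$$
(Equivalently, one may invoke Theorem~\ref{Th_TOTHAU} directly after reindexing $j\mapsto n-j$ to flip the sign of the exponent, using the evenness of $f^*$.) Finally, replacing the summation index $d$ by $n/d$, which permutes the divisors of $n$, and using $n/(n/d)=d$, produces exactly
$$N_n(b;t_1,\ldots,t_k) = \frac1n\sum_{d\,\mid\,n} c_d(b)\prod_{i=1}^k c_{\frac{n}{t_i}}\!\left(\frac{n}{d}\right),$$
completing the proof. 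I expect the grouping-by-gcd step --- establishing the $n$-evenness of the product of Ramanujan sums and identifying the inner sum as $c_{n/d}(b)$ --- to be the only non-mechanical part; everything else is a formal manipulation of the DFT.
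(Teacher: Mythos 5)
Your proof is correct and takes essentially the same route as the paper's: identify $N_n(b;t_1,\ldots,t_k)$ as the Cauchy convolution $(\varrho_{n,t_1}\otimes\cdots\otimes\varrho_{n,t_k})(b)$, compute its DFT via \eqref{Cauchy_prop} and Theorem~\ref{Th_varrho}, invert with \eqref{FFT2}, and use the $n$-evenness of $\prod_i c_{n/t_i}$ to turn the residue sum into a divisor sum. Your explicit grouping by $d=(j,n)$ is precisely the proof of Theorem~\ref{Th_TOTHAU}, which the paper cites at that point; you merely spell out the sign-flip and the final reindexing $d\mapsto n/d$ that the paper leaves implicit.
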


\begin{proof} Apply the properties of the DFT. Observe that
\begin{equation*}
(\varrho_{n,t_1}\otimes \cdots \otimes \varrho_{n,t_k})(b)= \mathlarger{\sum}_{\substack{1\leq x_1,\ldots,x_k\leq n \\ x_1+\ldots+x_k\equiv b
\text{ {\rm (mod $n$)}}\\ (x_i,n)=t_i, \ 1\leq i\leq k}} 1
\end{equation*}
is exactly the number $N_n(b;t_1,\ldots,t_k)$ of solutions of the given restricted congruence.

Therefore, by \eqref{Cauchy_prop} and Theorem \ref{Th_varrho},
\begin{equation*}
\widehat{N_n}(b;t_1,\ldots,t_k)= c_{\frac{n}{t_1}}(b) \cdots c_{\frac{n}{t_k}}(b),
\end{equation*}
where the variable for the DFT is $b$ ($n,t_1,\ldots, t_k$ being parameters). Now the IDFT formula \eqref{FFT2} gives
\begin{equation*}
N_n(b;t_1,\ldots,t_k) = \frac1{n} \mathlarger{\sum}_{j=1}^n c_{\frac{n}{t_1}}(j) \cdots c_{\frac{n}{t_k}}(j) e\! \left(\frac{bj}{n} \right).
\end{equation*}

By Corollary~\ref{cor:Ram Mob}(iii) and the associativity of $\gcd$ one has for every $i$ ($1\leq i\leq k$),
\begin{equation} \label{associ gcd}
c_{\frac{n}{t_i}}\left((j,n)\right)
=c_{\frac{n}{t_i}}\left(\left((j,n),\frac{n}{t_i}\right)\right)
=c_{\frac{n}{t_i}}\left(\left(j,\left(n,\frac{n}{t_i}\right)\right)\right)
=c_{\frac{n}{t_i}}\left(\left(j,\frac{n}{t_i}\right)\right)
=c_{\frac{n}{t_i}}\left(j\right).
\end{equation}

The properties \eqref{associ gcd} show that $m\mapsto c_{\frac{n}{t_1}}(m) \cdots c_{\frac{n}{t_k}}(m)$ is an $n$-even function. Now by applying Theorem \ref{Th_TOTHAU} we obtain \eqref{thm:k var1 for}.
\end{proof}

\begin{rema} \label{rem:orbicyclic} Note that a slight modification of the proof of \cite[Prop.\ 21]{TOT} furnishes an alternate proof for Theorem~\ref{thm:k var1}. Sun and Yang \cite{SY2014} obtained a different formula \textnormal{(}with a longer proof\textnormal{)} for the number of solutions of the linear congruence in Theorem~\ref{thm:k var1}, but we need the equivalent formula \eqref{thm:k var1 for} for the purposes of this paper \textnormal{(}see also \cite{BKSTT3} for another equivalent formula\textnormal{)}. We also remark that the special case of $b=0$, $t_i=\frac{n}{m_i}$, $m_i\mid n$ {\rm ($1\leq i\leq k$)} gives the function
$$
E(m_1, \ldots , m_k)=\frac{1}{n}\mathlarger{\sum}_{d\, \mid\, n} \varphi(d)\mathlarger{\prod}_{i=1}^{k}c_{m_i}\! \left(\frac{n}{d}\right),
$$
which was shown in \cite[Prop.\ 9]{TOT} to be equivalent to the {\it orbicyclic} \textnormal{(}multivariate arithmetic\textnormal{)} function defined in \cite{LIS} by
$$
E(m_1, \ldots , m_k):=\frac{1}{n}\mathlarger{\sum}_{q=1}^{n} \mathlarger{\prod}_{i=1}^{k}c_{m_i}(q).
$$
The orbicyclic function, $E(m_1, \ldots , m_k)$, has very interesting combinatorial and topological applications, in particular, in counting non-isomorphic maps on orientable surfaces, and was investigated in \cite{BKS2, LIS, MENE, TOT}. See also \cite{MENE2, WAL}.
\end{rema}

Now, using Theorem~\ref{thm:one var} and Theorem~\ref{thm:k var1}, we obtain the following general formula for the number of solutions of the restricted linear congruence \eqref{gen_rest_cong}.

\begin{theorem} \label{thm:k var1 more} Let $a_i,t_i, b,n\in \Z$, $n\geq 1$, $t_i\mid n$ {\rm ($1\leq i\leq k$)}. The number of solutions of the linear congruence $a_1x_1+\cdots +a_kx_k\equiv b \pmod{n}$, with $(x_i,n)=t_i$ {\rm ($1\leq i\leq k$)}, is
\begin{align} \label{thm:k var1 more for}
N_n(b;a_1,t_1,\ldots,a_k,t_k) & =\frac{1}{n}\left(\mathlarger{\prod}_{i=1}^{k}\frac{\varphi\left(\frac{n}{t_i}\right)}{\varphi\left(\frac{n}{t_id_i}\right)}\right)
\mathlarger{\sum}_{d\, \mid\, n}
c_d(b)\mathlarger{\prod}_{i=1}^{k}c_{\frac{n}{t_id_i}}\! \left(\frac{n}{d}\right)\\
& = \label{thm:k var2 more for}
\frac{1}{n} \left(\mathlarger{\prod}_{i=1}^{k}  \varphi\left(\frac{n}{t_i}\right)\right) \mathlarger{\sum}_{d\, \mid\, n}
c_d(b) \mathlarger{\prod}_{i=1}^{k} \frac{\mu\left(\frac{d}{(a_it_i,d)}\right)}{\varphi\left(\frac{d}{(a_it_i,d)}\right)},
\end{align}
where $d_i=(a_i,\frac{n}{t_i})$ {\rm ($1\leq i\leq k$)}.
\end{theorem}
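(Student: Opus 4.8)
The plan is to reduce the weighted congruence to the unweighted one handled by Theorem~\ref{thm:k var1}, using the single-variable count of Theorem~\ref{thm:one var} as the bridge, and then to rewrite the resulting Ramanujan sums via von Sterneck's formula (Theorem~\ref{thm:von rama}) in order to pass from \eqref{thm:k var1 more for} to \eqref{thm:k var2 more for}. Write $d_i=(a_i,\frac{n}{t_i})$ throughout.

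First I would partition the solutions according to the residues $y_i:=a_ix_i \bmod n$. For fixed targets $y_1,\dots,y_k$ with $y_1+\cdots+y_k\equiv b\pmod n$, the number of admissible $x_i$ (those with $(x_i,n)=t_i$ and $a_ix_i\equiv y_i\pmod n$) is governed entirely by Theorem~\ref{thm:one var}, applied with $a=a_i$, $t=t_i$, and right-hand side $y_i$. That theorem says such $x_i$ exist if and only if $t_i\mid y_i$ and $(\frac{y_i}{t_i},\frac{n}{t_i})=d_i$, which (since $t_i\mid n$) is exactly the single condition $(y_i,n)=t_id_i$; and in that case the fiber size is the constant $\frac{\varphi(n/t_i)}{\varphi(n/(t_id_i))}$, independent of the particular $y_i$. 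The key point is that this count depends on $y_i$ only through that gcd condition, so it factors out of the sum over the $y_i$.

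This leaves me to count $k$-tuples $(y_1,\dots,y_k)$ with $y_1+\cdots+y_k\equiv b\pmod n$ and $(y_i,n)=t_id_i$ — precisely the unweighted restricted congruence of Theorem~\ref{thm:k var1} with moduli $t_id_i$ in place of $t_i$. Substituting its value $\frac1n\sum_{d\mid n}c_d(b)\prod_i c_{n/(t_id_i)}(n/d)$ and multiplying by the factored-out fiber sizes yields \eqref{thm:k var1 more for} at once. For the second formula I would observe that $\varphi(n/(t_id_i))$ is constant in the summation variable $d$, so it may be moved inside the sum alongside $c_{n/(t_id_i)}(n/d)$; applying von Sterneck's formula $c_m(j)=\frac{\varphi(m)}{\varphi(m/(j,m))}\mu(m/(j,m))$ to each such factor produces a numerator $\varphi(n/(t_id_i))$ that cancels the denominator of the prefactor $\frac{\varphi(n/t_i)}{\varphi(n/(t_id_i))}$, leaving $\varphi(n/t_i)\cdot\frac{\mu(\cdot)}{\varphi(\cdot)}$ as required.

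The step demanding the most care — and the main obstacle — is the gcd/lcm bookkeeping that identifies the common argument of $\mu$ and $\varphi$ after the von Sterneck substitution. Here I would use the identities $t_id_i=t_i(a_i,\tfrac n{t_i})=(a_it_i,n)$ and, for divisors $a,b\mid n$, $(\tfrac na,\tfrac nb)=\tfrac n{\operatorname{lcm}(a,b)}$, to compute
\[
\frac{n/(t_id_i)}{\bigl(n/d,\;n/(t_id_i)\bigr)}=\frac{\operatorname{lcm}(d,t_id_i)}{t_id_i}=\frac{d}{(d,t_id_i)}=\frac{d}{(a_it_i,d)},
\]
the last equality holding because $d\mid n$ forces $(d,(a_it_i,n))=(d,a_it_i)$. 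Checking that these reductions are valid for every divisor $d\mid n$ (the degenerate cases where an argument equals $1$ are harmless, since then both $\mu$ and $\varphi$ take the value $1$) completes the passage to \eqref{thm:k var2 more for}.
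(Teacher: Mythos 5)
Your proposal is correct and follows essentially the same route as the paper's own proof: reduce to the unweighted congruence via $y_i=a_ix_i$, count fibers with Theorem~\ref{thm:one var}, count the $y$-tuples with Theorem~\ref{thm:k var1}, and then pass to \eqref{thm:k var2 more for} by von Sterneck's formula with the identical gcd/lcm computation $w_i=\frac{d}{(a_it_i,d)}$. Your write-up is in fact slightly more explicit than the paper's about why the fiber size is constant over admissible $y$-tuples, which is a welcome clarification but not a different argument.
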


\begin{proof} Assume that the linear congruence $a_1x_1+\cdots +a_kx_k\equiv b \pmod{n}$ has a solution
$\langle x_1, \ldots, x_k\rangle \in \Z_{n}^k$ with $(x_i,n)=t_i$ ($1\leq i\leq k$). Let $a_ix_i \equiv y_i \pmod{n}$ ($1\leq i\leq k$). Then $(a_ix_i,n)=(y_i,n)=t_id_i$, for some $d_i$ ($1\leq i\leq k$). Thus,
$(\frac{a_ix_i}{t_i},\frac{n}{t_i})=(\frac{y_i}{t_i},\frac{n}{t_i})=d_i$. But since $(\frac{x_i}{t_i},\frac{n}{t_i})=1$, we have $d_i=(a_i,\frac{n}{t_i})=(\frac{y_i}{t_i},\frac{n}{t_i})$.

By Theorem \ref{thm:k var1}, the number of solutions of the linear congruence $y_1+\cdots +y_k\equiv b \pmod{n}$, with $(y_i,n)=t_id_i$ ($1\leq i\leq k$),
is
\begin{align} \label{eq_1}
\frac{1}{n}\mathlarger{\sum}_{d\, \mid\, n} c_d(b)\mathlarger{\prod}_{i=1}^{k}c_{\frac{n}{t_id_i}}\! \left(\frac{n}{d}\right).
\end{align}

Now, given the solutions $\langle y_1, \ldots, y_k\rangle$ of the latter congruence, we need to find the number of solutions of $a_ix_i \equiv y_i \pmod{n}$, with $(x_i,n)=t_i$ ($1\leq i\leq k$). Since 
$(a_i,\frac{n}{t_i})=(\frac{y_i}{t_i},\frac{n}{t_i})=d_i$, by Theorem \ref{thm:one var}, the latter congruence has exactly
\begin{equation} \label{eq_2}
\frac{\varphi(\frac{n}{t_i})}{\varphi(\frac{n}{t_id_i})}
\end{equation}
solutions. Combining \eqref{eq_1} and \eqref{eq_2} we get the formula \eqref{thm:k var1 more for}.

Furthermore, applying von Sterneck's formula, \eqref{von rama for}, we deduce
\begin{equation}\label{eq_3}
c_{\frac{n}{t_id_i}} \left(\frac{n}{d}\right) = \frac{\varphi(\frac{n}{t_id_i})\mu(w_i)}{\varphi(w_i)},
\end{equation}
where, denoting by $[a,b]$ the least common multiple (lcm) of the integers $a$ and $b$,
\begin{equation*}
w_i =
\frac{\frac{n}{t_id_i}}{(\frac{n}{t_id_i},\frac{n}{d})}=
\frac{\frac{n}{t_id_i}}{\frac{n}{[t_id_i,d]}}=
\frac{[t_id_i,d]}{t_id_i} =\frac{d}{(t_id_i,d)}=
\frac{d}{((a_it_i,n),d)}= \frac{d}{(a_it_i,d)}.
\end{equation*}

By inserting \eqref{eq_3} into \eqref{thm:k var1 more for} we get \eqref{thm:k var2 more for}.
\end{proof}

\begin{rema} For fixed $a_i,t_i$ \textnormal{(}$1\leq i\leq k$\textnormal{)} and fixed $n$, the function
\[
b \mapsto N_n(b;a_1,t_1,\ldots,a_k,t_k)
\]
is an even function \textnormal{(}mod $n$\textnormal{)}. This follows from the formula \eqref{thm:k var1 more for}, showing that
\[
N_n(b;a_1,t_1,\ldots,a_k,t_k)
\]
is a linear combination of the functions $b\mapsto c_d(b)$ \textnormal{(}$d\mid n$\textnormal{)}, which are all even \textnormal{(}mod $n$\textnormal{)} by \eqref{for:Ram Mob}. See also \eqref{associ gcd}.
\end{rema}

\begin{rema} In the case of $k=1$, by comparing Theorem \ref{thm:one var} with formula \eqref{thm:k var1 more for} and by denoting $t_1d_1=s$, we obtain, as a byproduct, the following identity, which is similar to \eqref{Orth1 for} \textnormal{(}and can also be proved directly\textnormal{):} If $b,n\in \Z$, $n\geq 1$, and $s\mid n$, then
\begin{align}\label{New orthogonal}
\mathlarger{\sum}_{d\, \mid\, n} c_d(b) c_{\frac{n}{s}}\! \left(\frac{n}{d}\right) &=
  \begin{cases}
    n, & \text{if $(b,n)=s$},\\
    0, & \text{if $(b,n)\neq s$}.
  \end{cases}
\end{align}
\end{rema}

If in \eqref{gen_rest_cong} one has $a_i=0$ for every $1\leq i\leq k$, then clearly there are solutions 
$\langle x_1,\ldots,x_k\rangle$ if and only if $b\equiv 0\pmod{n}$ and $t_i \mid n$ ($1\leq i\leq k$), and in this case there are $\varphi(n/t_1)\cdots \varphi(n/t_k)$ solutions.

Consider the restricted linear congruence \eqref{gen_rest_cong} and assume that there is an $i_0$ such that $a_{i_0}\ne 0$. For every prime divisor
$p$ of $n$ let $r_p$ be the exponent of $p$ in the prime factorization of $n$ and let $m_p$ denote the smallest $j\geq 1$ such that there is some $i$
with $p^j \nmid a_it_i$. There exists a finite $m_p$ for every $p$, since
for a sufficiently large $j$ one has $p^j\nmid a_{i_0}t_{i_0}$. Furthermore, let
$$
e_p = \# \{i: 1\leq i\leq k, p^{m_p}\nmid a_it_i \}.
$$
By definition, $e_p$ is at most the number of $i$ such that $a_i\ne 0$.

\begin{theorem} \label{th_gen_expl}  Let $a_i,t_i, b,n\in \Z$, $n\geq 1$, $t_i\mid n$ {\rm ($1\leq i\leq k$)} and
assume that $a_i\neq 0$ for at least one $i$. Consider the linear
congruence $a_1x_1+\cdots +a_kx_k\equiv b \pmod{n}$, with
$(x_i,n)=t_i$ {\rm ($1\leq i\leq k$)}. If there is a prime $p\mid n$ such that $m_p\leq r_p$ and $p^{m_p-1}\nmid b$ or
$m_p\geq r_p+1$ and $p^{r_p}\nmid b$, then the linear congruence has no solution. Otherwise, the number of solutions is
\begin{equation} \label{main_prod_formula}
\mathlarger{\prod}_{i=1}^{k} \varphi\left(\frac{n}{t_i}\right)
\mathlarger{\prod}_{\substack{p\,\mid\, n \\ m_p \,\leq \, r_p \\ p^{m_p} \,\mid\, b}} p^{m_p-r_p-1} \left(1-\frac{(-1)^{e_p-1}}{(p-1)^{e_p-1}} \right)
\mathlarger{\prod}_{\substack{p\, \mid\, n \\ m_p \,\leq \, r_p \\ p^{m_p-1} \, \|\, b}} p^{m_p-r_p-1} \left(1-\frac{(-1)^{e_p}}{(p-1)^{e_p}}\right),
\end{equation}
where the last two products are over the prime factors $p$ of $n$ with the given additional properties. Note that the last product is empty and equal
to $1$ if $b=0$.
\end{theorem}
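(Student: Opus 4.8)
The plan is to start from the closed form \eqref{thm:k var2 more for} of Theorem~\ref{thm:k var1 more}, namely
\[N_n(b;a_1,t_1,\ldots,a_k,t_k)=\frac1n\left(\prod_{i=1}^k\varphi\!\left(\tfrac{n}{t_i}\right)\right)S,\qquad S=\sum_{d\mid n}c_d(b)\prod_{i=1}^k\frac{\mu\!\left(\frac{d}{(a_it_i,d)}\right)}{\varphi\!\left(\frac{d}{(a_it_i,d)}\right)},\]
and to evaluate $S$ by reducing it to a product over the primes dividing $n$. First I would observe that, for fixed $b$, the map $d\mapsto c_d(b)$ is multiplicative by Corollary~\ref{cor:Ram Mob}(i), and that each factor $d\mapsto \mu(\frac{d}{(a_it_i,d)})/\varphi(\frac{d}{(a_it_i,d)})$ is multiplicative in $d$ as well, since $(a_it_i,\cdot)$, $\mu$, and $\varphi$ all respect coprime factorizations. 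Hence the whole summand is multiplicative in $d$, so that $S=\prod_{p\mid n}S_p$ with $S_p=\sum_{j=0}^{r_p}c_{p^j}(b)\prod_{i=1}^k\mu(\cdots)/\varphi(\cdots)$; writing $\frac1n=\prod_{p\mid n}p^{-r_p}$ then lets me treat each prime separately. (Alternatively one may first invoke the Chinese remainder reduction \eqref{multipl_prop} and apply \eqref{thm:k var2 more for} at a prime power; the local computation is identical.)

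Next I would compute the local factors. Setting $\alpha_i=v_p(a_it_i)$ (the $p$-adic valuation, with $\alpha_i=+\infty$ when $a_i=0$), one has $d/(a_it_i,d)=p^{\max(0,\,j-\alpha_i)}$ for $d=p^j$, so the $i$-th factor equals $1$ if $j\le\alpha_i$, equals $-1/(p-1)$ if $j=\alpha_i+1$, and vanishes if $j\ge\alpha_i+2$. Since $m_p=1+\min_i\alpha_i$ and $e_p=\#\{i:\alpha_i=m_p-1\}$, the product over $i$ equals $1$ for $0\le j\le m_p-1$, equals $(-1/(p-1))^{e_p}$ for $j=m_p$, and is $0$ for $j>m_p$. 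For the Ramanujan factors I would use the prime-power evaluation \eqref{Ramanujan_prime_power} together with the telescoping identity $\sum_{j=0}^{J}\varphi(p^j)=p^J$ (equivalently \eqref{Orth1 for cons}).

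It then remains to run the case analysis on $m_p$ versus $r_p$, and on $v_p(b)$ versus $m_p-1$. When $m_p\ge r_p+1$ every surviving factor is $1$, so $S_p=\sum_{j=0}^{r_p}c_{p^j}(b)$, which by \eqref{Orth1 for cons} equals $p^{r_p}$ if $p^{r_p}\mid b$ and $0$ otherwise; after dividing by $p^{r_p}$ this gives the trivial factor $1$ (so such primes do not appear in the products) or forces $N_n=0$, exactly the second no-solution alternative. When $m_p\le r_p$, only $0\le j\le m_p$ contribute and $S_p=\sum_{j=0}^{m_p-1}c_{p^j}(b)+(-1/(p-1))^{e_p}c_{p^{m_p}}(b)$; splitting according to $p^{m_p}\mid b$, $p^{m_p-1}\|b$, or $p^{m_p-1}\nmid b$ and applying the telescoping identity yields, respectively, $p^{m_p-1}\bigl(1-\tfrac{(-1)^{e_p-1}}{(p-1)^{e_p-1}}\bigr)$, $p^{m_p-1}\bigl(1-\tfrac{(-1)^{e_p}}{(p-1)^{e_p}}\bigr)$, and $0$. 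Dividing by $p^{r_p}$ reproduces the two products in \eqref{main_prod_formula} and identifies $p^{m_p-1}\nmid b$ as the first no-solution alternative, while the case $b=0$ falls under $p^{m_p}\mid b$ for every $p$, so the second product is empty. The only delicate part is this bookkeeping: keeping the boundary indices ($j=m_p$, and $v_p(b)=m_p-1$) straight and checking that the algebraic simplification of the two surviving terms matches the stated factors exactly; everything else is a direct substitution.
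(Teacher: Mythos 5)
Your proposal is correct and takes essentially the same approach as the paper: both start from \eqref{thm:k var2 more for}, reduce to a local sum over divisors of $p^{r_p}$ for each prime $p\mid n$, and run the identical case analysis ($m_p\leq r_p$ versus $m_p\geq r_p+1$, then the exponent of $p$ in $b$ against $m_p-1$) using \eqref{Orth1 for cons} and \eqref{Ramanujan_prime_power}. The only cosmetic difference is the global-to-local step — you factor the divisor sum via multiplicativity of the summand in $d$, whereas the paper invokes the Chinese-remainder property \eqref{multipl_prop} — and you yourself note this alternative, with the local computation unchanged.
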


\begin{proof} For a prime power $n=p^{r_p}$ ($r_p\ge 1$) the inner sum of \eqref{thm:k var2 more for} is
\begin{equation*}
W:=\mathlarger{\sum}_{d\, \mid\, p^{r_p}} c_d(b) \mathlarger{\prod}_{i=1}^{k}
\frac{\mu\left(\frac{d}{(a_it_i,d)}\right)}{\varphi \left(
\frac{d}{(a_it_i,d)}\right)} = \mathlarger{\sum}_{j=0}^{r_p} c_{p^j}(b)
\mathlarger{\prod}_{i=1}^{k}
\frac{\mu\left(\frac{p^j}{(a_it_i,p^j)}\right)}{\varphi
\left(\frac{p^j}{(a_it_i,p^j)}\right)}.
\end{equation*}

Assume that $m_p\leq r_p$. Then $p^{m_p-1}\mid a_it_i$ for every $i$ and $p^{m_p}\nmid a_it_i$ for at least one $i$. Therefore, $(a_it_i,p^j)=p^j$ if $0\leq j\leq m_p-1$. Also, $(a_it_i,p^{m_p})=p^{m_p-1}$ if $p^{m_p}\nmid a_it_i$, and this holds for $e_p$ distinct values of $i$. We obtain
\begin{equation*}
W= \mathlarger{\sum}_{j=0}^{m_p-1} c_{p^j}(b) + c_{p^{m_p}}(b)
\frac{(-1)^{e_p}}{(p-1)^{e_p}},
\end{equation*}
the other terms are zero. We deduce by using \eqref{Orth1 for cons} and \eqref{Ramanujan_prime_power} that
\begin{align}
W= \begin{cases}
p^{m_p-1}\left(1- \frac{(-1)^{e_p-1}}{(p-1)^{e_p-1}}\right), & \text{if $p^{m_p}\mid b$}, \\
p^{m_p-1}\left(1- \frac{(-1)^{e_p}}{(p-1)^{e_p}}\right), & \text{if $p^{m_p-1}\, \|\, b$}, \\
0, & \text{if $p^{m_p-1}\nmid b$}. \end{cases}
\end{align}

Now assume that $m_p\geq r_p+1$. Then $p^{r_p}\mid a_it_i$ for every $i$ and $(a_it_i,p^j)=p^j$ for every $j$ with $0\leq j\leq r_p$. Hence, by using \eqref{Orth1 for cons},
\begin{equation*}
W= \mathlarger{\sum}_{j=1}^{r_p} c_{p^j}(b)= \begin{cases}
p^{r_p}, & \text{if $p^{r_p}\mid b$}, \\
0, & \text{if $p^{r_p}\nmid b$}. \end{cases}
\end{equation*}

Inserting into \eqref{thm:k var2 more for} and by using the multiplicativity property \eqref{multipl_prop} we deduce that there is no solution in the specified cases. Otherwise, the number of solutions is
given by
\begin{equation*}
\mathlarger{\prod}_{p\, \mid\, n} p^{-r_p}
\mathlarger{\prod}_{i=1}^{k} \varphi\left(\frac{n}{t_i}\right)
\mathlarger{\prod}_{\substack{p\, \mid\, n \\ m_p \, \geq \, r_p+1 \\ p^{r_p}\, \mid\, b}} p^{r_p}
\mathlarger{\prod}_{\substack{p\, \mid\, n \\ m_p \, \leq \, r_p \\ p^{m_p}\, \mid\, b}} p^{m_p-1} \left(1-\frac{(-1)^{e_p-1}}{(p-1)^{e_p-1}} \right)
\end{equation*}
\begin{equation*}
\times \mathlarger{\prod}_{\substack{p\, \mid\, n \\ m_p \, \leq \, r_p \\ p^{m_p-1} \, \|\, b}} p^{m_p-r_p-1} \left(1-\frac{(-1)^{e_p}}{(p-1)^{e_p}}\right),
\end{equation*}
where the multiplicativity property is also applied to the product of the $\varphi$ factors. This gives \eqref{main_prod_formula}.
\end{proof}

\begin{corollary} \label{cor_zero} The restricted congruence given in Theorem~\ref{th_gen_expl} has no solutions if and only if one of the following cases holds:

(i) there is a prime $p\mid n$ with $m_p\leq r_p$ and $p^{m_p-1}\nmid b$;

(ii) there is a prime $p\mid n$ with $m_p\geq r_p+1$ and $p^{r_p}\nmid b$;

(iii) there is a prime $p\mid n$ with $m_p\leq r_p$, $e_p=1$ and $p^{m_p}\mid b$;

(iv) $n$ is even, $m_2 \leq r_2$, $e_2$ is odd and $2^{m_2}\mid b$;

(v) $n$ is even, $m_2 \leq r_2$, $e_2$ is even and $2^{m_2-1}\, \|\, b$.
\end{corollary}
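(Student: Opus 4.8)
The plan is to bootstrap the entire statement off Theorem~\ref{th_gen_expl}. That theorem already tells us that the congruence has no solutions in exactly two situations: either its stated no-solution hypothesis holds, or that hypothesis fails but the explicit product \eqref{main_prod_formula} evaluates to $0$. The no-solution hypothesis of Theorem~\ref{th_gen_expl} is verbatim the disjunction of cases (i) and (ii). Hence the whole problem collapses to one question: when does the product \eqref{main_prod_formula} vanish?

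To answer this I would first discard the factors that can never be zero. The leading factor $\prod_{i=1}^k \varphi(n/t_i)$ is a product of positive integers, and each prime-power factor $p^{m_p - r_p - 1}$ is a positive rational; none of these can vanish. Consequently \eqref{main_prod_formula} is zero if and only if one of the bracketed factors is zero --- either a factor $1 - \frac{(-1)^{e_p-1}}{(p-1)^{e_p-1}}$ coming from a prime $p$ with $m_p \leq r_p$ and $p^{m_p}\mid b$, or a factor $1 - \frac{(-1)^{e_p}}{(p-1)^{e_p}}$ coming from a prime $p$ with $m_p \leq r_p$ and $p^{m_p-1}\,\|\,b$.

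The remaining work is an elementary case analysis, for which the key standing fact is that $e_p \geq 1$ always, because by the definition of $m_p$ there is at least one index $i$ with $p^{m_p}\nmid a_it_i$. For the first kind of factor, vanishing is equivalent to $(p-1)^{e_p-1} = (-1)^{e_p-1}$. When $p$ is odd we have $p-1 \geq 2$, so $(p-1)^{e_p-1} > 1$ unless $e_p - 1 = 0$; equality therefore forces $e_p = 1$, yielding case (iii). When $p = 2$ the left-hand side equals $1$ and the condition reduces to $(-1)^{e_2-1} = 1$, i.e. $e_2$ odd, yielding case (iv) (which subsumes the $p=2$, $e_2=1$ instance of (iii)). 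For the second kind of factor, vanishing is equivalent to $(p-1)^{e_p} = (-1)^{e_p}$; for odd $p$ the left-hand side is at least $2$ since $e_p \geq 1$, so this never occurs, while for $p = 2$ it reduces to $(-1)^{e_2}=1$, i.e. $e_2$ even, yielding case (v).

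Putting the pieces together, the congruence has no solutions if and only if the hypothesis of Theorem~\ref{th_gen_expl} holds --- cases (i) and (ii) --- or that hypothesis fails and the product vanishes, which by the analysis above happens precisely in cases (iii), (iv), and (v). The only real care needed is to keep the regimes $p=2$ and $p$ odd separate throughout and to invoke $e_p \geq 1$ (it is exactly this that rules out any vanishing of the second-kind factor for odd primes); there is no genuine obstacle beyond this bookkeeping.
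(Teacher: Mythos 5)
Your proof is correct and follows exactly the paper's (very terse) argument: the paper's proof of Corollary~\ref{cor_zero} simply says to use the first part of Theorem~\ref{th_gen_expl} and examine when the factors in \eqref{main_prod_formula} vanish, which is precisely the reduction and case analysis you carry out. Your explicit treatment of the $p=2$ versus $p$ odd regimes, the observation that $e_p\geq 1$, and the note that (iv) subsumes the $p=2$ instance of (iii) are the details the paper leaves to the reader, and they are all accurate.
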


\begin{proof} Use the first part of Theorem~\ref{th_gen_expl} and examine the conditions under which the factors of the products in \eqref{main_prod_formula} vanish.
\end{proof}

\begin{example} \label{examp}

1) Consider $2x_1+x_2+2x_3\equiv 12 \pmod{24}$, with $(x_1,24)=3$, $(x_2,24)=2$, $(x_3,24)=4$.

Here $24=2^3\cdot 3$,

$2\mid a_1t_1=6$, $2\mid a_2t_2=2$, $2\mid a_3t_3=8$,

$2^2\nmid a_1t_1=6$, $2^2\nmid a_2t_2=2$, $2^2\mid a_3t_3=8$, hence $e_2=2$ and $m_2=2$, also $2^2\mid b=12$,

$3\mid a_1t_1=6$, $3\nmid a_2t_2=2$, $3\nmid a_3t_3=8$, hence $e_3=2$, $m_3=1$, also $3^1\mid b=12$.

The number of solutions is
$$
N=\varphi(24/3)\varphi(24/2)\varphi(24/4)
2^{2-3-1}\left(1-\frac{(-1)^{2-1}}{(2-1)^{2-1}}\right)
3^{1-1-1}\left(1-\frac{(-1)^{2-1}}{(3-1)^{2-1}}\right)=8.
$$

2) Now let $2x_1+x_2+2x_3\equiv 4 \pmod{24}$, with $(x_1,24)=3$, $(x_2,24)=2$, $(x_3,24)=4$, where only $b$ is changed.

Here $2^2\mid b=4$, $3^{1-1}\, \|\, b=4$.

The number of solutions is
$$
N=\varphi(24/3)\varphi(24/2)\varphi(24/4)
2^{2-3-1}\left(1-\frac{(-1)^{2-1}}{(2-1)^{2-1}}\right)3^{1-1-1}
\left(1-\frac{(-1)^{2}}{(3-1)^{2}}\right)=4.
$$

3) Let $2x_1+x_2+2x_3\equiv 5 \pmod{24}$, with $(x_1,24)=3$, $(x_2,24)=2$, $(x_3,24)=4$, again only $b$ is changed.

Here $2^{2-1}\nmid b=5$, hence, there are no solutions by Corollary~\ref{cor_zero}(i). (Well, this is obvious, since all terms have to be even, but $5$ is odd.)

4) Let $2x_1+x_2+2x_3\equiv 10 \pmod{24}$, with $(x_1,24)=3$, $(x_2,24)=2$, $(x_3,24)=4$, again only $b$ is changed.

Here $2^{2-1}\, \|\, b=10$, hence, there is no solution by Corollary~\ref{cor_zero}(v).
\end{example}

We believe that Theorem~\ref{th_gen_expl} and Corollary~\ref{cor_zero} are strong tools and may lead to interesting applications/implications. For example, we can connect the restricted linear congruences to the generalized knapsack problem. In fact, Corollary~\ref{cor_zero} helps us to deal with this problem in a quite natural case:

\begin{rema} \label{knapsack}
The generalized knapsack problem with $R=\mathbb{Z}_n$ and $S=\mathbb{Z}_n^{*}$ has no solutions if and only if one of the cases of Corollary~\ref{cor_zero} holds.
\end{rema}

\begin{rema} 
In \cite{BKSTT2}, we applied Theorem~\ref{th_gen_expl} in constructing an almost-universal hash function family using which we gave a generalization of the authentication code with secrecy presented in \cite{ACP}.
\end{rema}

\begin{rema}\label{sepi app}
Very recently, Bibak et al. \cite{BKS2} using Theorem~\ref{th_gen_expl} as the main ingredient proved an explicit and practical formula for the number of surface-kernel epimorphisms from a co-compact Fuchsian group to a cyclic group (see also \cite{MENE}). This problem has important applications in combinatorics, geometry, string theory, and quantum field theory (QFT). As a consequence, they obtained an `equivalent' form of Harvey's famous theorem on the cyclic groups of automorphisms of compact Riemann surfaces (see also \cite{LIS}).
\end{rema}

\begin{rema} If $k=1$ then $e_p=1$ for every prime $p\mid n$, and it is easy to see that from Theorem \ref{th_gen_expl} and Corollary \ref{cor_zero} we reobtain Theorem \ref{thm:one var}.
\end{rema}

The following formula is a special case of Theorem \ref{th_gen_expl} and was obtained by Sburlati \cite{Sbu2003} with an incomplete proof.

\begin{corollary} Assume that for every prime $p\mid n$ one has $m_p=1$, that is $p\nmid a_it_i$ for at least one $i\in \{1,\ldots,k\}$. Then the number of solutions of the restricted linear congruence \eqref{gen_rest_cong} is
\begin{equation} \label{COR}
 \frac{1}{n}\mathlarger{\prod}_{i=1}^{k} \varphi\left(\frac{n}{t_i}\right)
 \mathlarger{\prod}_{p\, \mid\, n,\, p\, \mid\, b} \left(1-\frac{(-1)^{e_p-1}}{(p-1)^{e_p-1}} \right)\mathlarger{\prod}_{p\, \mid\, n, \, p\,
 \nmid\, b} \left(1-\frac{(-1)^{e_p}}{(p-1)^{e_p}}\right).
\end{equation}
\end{corollary}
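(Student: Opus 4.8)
The plan is to derive this corollary directly from Theorem~\ref{th_gen_expl} by specializing to the hypothesis $m_p=1$ for every prime $p\mid n$. No new machinery is required: the whole task is to check that the no-solution alternative cannot arise and then to simplify the product formula \eqref{main_prod_formula}.

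First I would dispose of the existence question. Theorem~\ref{th_gen_expl} states that there is no solution precisely when some prime $p\mid n$ satisfies either $m_p\le r_p$ together with $p^{m_p-1}\nmid b$, or $m_p\ge r_p+1$ together with $p^{r_p}\nmid b$. Under the hypothesis $m_p=1$, and since $p\mid n$ forces $r_p\ge 1$, we always have $m_p=1\le r_p$, so the second alternative is vacuous. In the first alternative $p^{m_p-1}=p^0=1$ divides $b$ trivially, so that condition never holds either. Hence the restricted congruence always has solutions and their number is given by \eqref{main_prod_formula}.

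Next I would substitute $m_p=1$ into \eqref{main_prod_formula} and read off how the indexing collapses. The exponent $p^{m_p-r_p-1}$ becomes $p^{1-r_p-1}=p^{-r_p}$ for every relevant $p$. The side condition $m_p\le r_p$ now holds for all $p\mid n$; the condition $p^{m_p}\mid b$ reads simply $p\mid b$; and $p^{m_p-1}\,\|\,b$ reads $p^0\,\|\,b$, which (by the definition of $\|$) means $p\nmid b$. Thus the two products run exactly over the primes $p\mid n$ with $p\mid b$ and with $p\nmid b$ respectively, and together they exhaust all prime divisors of $n$.

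Finally I would collect the powers of $p$. Pulling the factor $p^{-r_p}$ out of both products and multiplying over all $p\mid n$ gives $\prod_{p\mid n}p^{-r_p}=1/n$, using $n=\prod_{p\mid n}p^{r_p}$. What remains are precisely the two Euler-type factors over $p\mid b$ and $p\nmid b$, together with the prefactor $\prod_{i=1}^{k}\varphi(n/t_i)$, which is exactly \eqref{COR}. The only point demanding a little care---the main, and rather minor, obstacle---is the bookkeeping that rewrites $p^{m_p-1}\,\|\,b$ as the clean condition $p\nmid b$ and confirms that the two index sets partition $\{p:p\mid n\}$, so that the scattered factors $p^{-r_p}$ reassemble into the single prefactor $1/n$.
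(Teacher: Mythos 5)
Your proof is correct and follows exactly the route the paper intends: the paper states this corollary as an immediate specialization of Theorem~\ref{th_gen_expl} to $m_p=1$, and your verification---that the no-solution alternatives are vacuous, that the conditions $p^{m_p}\mid b$ and $p^{m_p-1}\,\|\,b$ collapse to $p\mid b$ and $p\nmid b$ respectively, and that the factors $p^{-r_p}$ over all $p\mid n$ reassemble into $1/n$---is precisely the omitted bookkeeping.
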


\section{Concluding remarks}

As we already mentioned, the problem of counting the number of solutions of the linear congruence $a_1x_1+\cdots +a_kx_k\equiv b \pmod{n}$, with $(x_i,n)=t_i$ ($1\leq i\leq k$), is very well-motivated and has found intriguing applications in number theory, combinatorics, geometry, computer science, cryptography, string theory, and quantum field theory. In this paper, we obtained an explicit formula for the number of solutions of this linear congruence in its most general form, that is, for arbitrary integers $a_1,t_1,\ldots,a_k,t_k, b,n$ ($n\geq 1$). As a consequence, we derived necessary and sufficient conditions under which the above restricted linear congruence has no solutions. As this problem has appeared in several areas in mathematics, computer science and physics, we believe that our formulas might lead to more applications/implications in these or other directions.

\section*{Acknowledgements}

During the preparation of this work the first author was supported by a Fellowship from the University of Victoria (UVic Fellowship).

\end{document}